\documentclass[11pt]{extarticle}

\usepackage[left=1in,right=1in,top=1in,bottom=1.3in]{geometry}
\usepackage{amsmath, amssymb, amsthm, latexsym, amsfonts, indentfirst, xcolor, mathtools, manfnt, microtype}
\usepackage[utf8]{inputenc}
\usepackage[english]{babel}

\usepackage[breaklinks]{hyperref}
\usepackage[nameinlink]{cleveref}
\hypersetup{
	colorlinks = true, % Colours links instead of ugly boxes
	urlcolor = cyan, % Colour for external hyperlinks
	linkcolor = teal, % Colour of internal links
	citecolor = cyan % Colour of citations
}

\usepackage{tikz} % for figures
%\usetikzlibrary{patterns}
%\usepackage{pgfplots}
%\usepackage{float}

\let\OLDthebibliography\thebibliography
\renewcommand\thebibliography[1]{
	\OLDthebibliography{#1}
	\setlength{\parskip}{0pt}
	\setlength{\itemsep}{0pt plus 0.3ex}
}

\newtheorem{Theorem}{Theorem}
\Crefname{Theorem}{Theorem}{Theorems}

\newtheorem{Lemma}{Lemma}

\newtheorem{Proposition}{Proposition}

\newcommand{\R}{\mathbb R}
\newcommand{\Z}{\mathbb Z}
\newcommand{\N}{\mathbb N}

\newcommand{\B}{\mathcal B}
\newcommand{\F}{\mathcal F}
\newcommand{\M}{\mathcal M}

\newcommand{\e}{\mathbf e}
\newcommand{\SC}{\mathrm{SC}}
\newcommand{\U}{\mathbf{u}}
\newcommand{\VV}{V_1\times\dots\times V_k}
\newcommand{\W}{\mathbf{w}}
\newcommand{\x}{\mathbf{x}}
\newcommand{\y}{\mathbf{y}}

\newcommand{\cube}{\mbox{\mancube}}
\newcommand{\tetrahedron}{%
	\tikz[baseline]{
		\draw (0,0) -- (-0.05,0.15);
		\draw (-0.05,0.15) -- (-0.2,0.2);
		\draw (-0.05,0.15) -- (0.09,0.26);
		\draw (0,0) -- (0.09,0.26);
		\draw (-0.2,0.2) -- (0.09,0.26);
		\draw (-0.2,0.2) -- (0,0)
	}
}

\begin{document}

\date{}
\title{Cutting corners}
\author{
	Andrey Kupavskii\thanks{Moscow Institute of Physics and Technology,  St. Petersburg State University, Innopolis University, Russia. Email:~\href{mailto:kupavskii@ya.ru}{\tt kupavskii@ya.ru}.}
	\and
	Arsenii Sagdeev\thanks{Alfréd Rényi Institute of Mathematics, Budapest, Hungary and MIPT, Moscow, Russia. Email:~\href{mailto:sagdeevarsenii@gmail.com}{\tt sagdeevarsenii@gmail.com}.}
	\and
	Dmitrii Zakharov\thanks{ Department of Mathematics, Massachusetts Institute of Technology, Cambridge, MA 02139, USA.     
    Email:~\href{mailto:zakhdm@mit.edu}{\tt zakhdm@mit.edu}.} }

\maketitle

\begin{abstract}
	We say that a subset $\mathcal M$ of $\mathbb R^n$ is \textit{exponentially Ramsey} if there exists $\varepsilon>0$ and $n_0$ such that $\chi(\mathbb R^n, \mathcal M) \ge (1+\varepsilon)^n$ for any $n>n_0$, where $\chi(\mathbb R^n, \mathcal M)$ stands for the minimum number of colors in a coloring of $\R^n$ such that no copy of $\mathcal M$ is monochromatic. One important result in Euclidean Ramsey theory is due to Frankl and R\"odl, and states the following (under some mild extra conditions): if both $\mathcal{N}_1$ and $\mathcal{N}_2$ are exponentially Ramsey then so is their Cartesian product. Applied several times to simple two-point sets $\mathcal{N}_i$, this result implies that any subset $\mathcal M$ of a `hyperrectangle'  $\mathcal{N}_1\times\cdots\times \mathcal{N}_k$ is exponentially Ramsey.
	
	However, generally, such `embeddings' of $\mathcal M$ result in very inefficient bounds on the aforementioned $\varepsilon$. In this paper, we present another way of combining exponentially Ramsey sets, which gives much better estimates in some important cases. In particular, we show that the chromatic number of $\mathbb R^n$ with a forbidden equilateral triangle satisfies $\chi(\mathbb R^n, \triangle) \ge \big(1.0742...+o(1)\big)^n$, greatly improving upon the previous constant $1.0144$. We also obtain similar strong results  for regular simplices of larger dimensions, as well as for related geometric Ramsey-type questions in Manhattan norm.
	
	We then show that the same technique implies several interesting corollaries in other combinatorial problems. In particular, we give an explicit upper bound on the size of a family $\mathcal F \subset 2^{[n]}$ that contains no \textit{weak $k$-sunflowers}, i.e. no collection of $k$ sets with pairwise intersections of the same size. This bound improves upon previously known results for all $k\ge 4$. %Second, we prove that there are three pairwise orthogonal vectors \textcolor{orange}{TBA}.
	Finally, we also present a simple deduction of the (other) celebrated Frankl--R\"odl theorem from an earlier result of Frankl and Wilson. It gives probably the shortest known proof of Frankl and R\"odl result with the most efficient bounds.
	
	\vspace{2mm}
	%MSC-class:	05D10, 05D05, 05C15
	\noindent \textbf{Key-words}: weak sunflowers, forbidden intersections, Euclidean Ramsey theory.
	\end{abstract}

\section{Introduction} \label{sec1}

\subsection{Euclidean Ramsey Theory} \label{sec1:ERT}

Probably the most famous geometric coloring problem is to determine the chromatic number of the plane. For an $n$-dimensional Euclidean space $\R^n$, its {\it chromatic number} $\chi(\R^n)$ is the smallest $r$ such that there exists a coloring of the points of $\R^n$ with $r$ colors, i.e. an {\it $r$-coloring}, and with no two points of the same color at unit distance apart. Currently, the  best bounds on the plane are $5 \le \chi(\R^2) \le 7$, where the lower bound was obtained relatively recently, see~\cite{degrey,Exoo,Soifer}.

In an influential paper~\cite{EGMRSS1}, Erd\H os, Graham, Montgomery, Rothschild, Spencer and Straus laid the foundation of {\it Euclidean Ramsey theory}, a domain that studies different questions about monochromatic patterns in colorings of Euclidean spaces.  See a recent survey by Graham~\cite{Graham2017} on this topic. One of the central notions in this field is as follows: for a subset $\M \subset \R^d$, the {\it chromatic number $\chi(\R^n,\M)$} is defined as the smallest $r$ such that there exists an $r$-coloring of $\R^n$ with no monochromatic isometric copy of $\M$. We say that a set $\M$ is {\it Ramsey} if $\chi(\R^n, \M)\to \infty$ as $n\to \infty,$ and {\it exponentially Ramsey} if there exists some $\varepsilon>0$ such that\footnote{Throughout this paper, all the $o$-notation is with respect to $n\to \infty$, whenever the opposite is not stated explicitly.} $\chi(\R^n, \M)\ge \big(1+\varepsilon+o(1)\big)^n$.

Frankl and Wilson~\cite{FranklWil1981} showed that the simplest configuration $\M$, consisting of $2$ points at unit distance apart, is exponentially Ramsey using polynomial method. Actually, they showed a bit more, namely, that this configuration is \textit{super-Ramsey}. We call $\M\subset \R^d$ {\it super-Ramsey}, if for a sufficiently small $\varepsilon>0$ and for any dimension $n$, there is a set $V\subset \R^n$ of size that is at most exponential  in $n$ such that $|V|/\alpha(V, \M) \ge \big(1+\varepsilon+o(1)\big)^n$, where $\alpha(V,\M)$ stands for the maximum cardinality $|W|$ of a subset $W\!\subset\! V$ with no isometric copy of $\M$. This property is stronger than exponentially Ramsey. Just note that
\begin{equation} \label{eq:chi_alpha2}
	\chi(\R^n, \M) \ge \frac{|V|}{\alpha(V, \M)}
\end{equation}
by the pigeonhole principle. If $\M$  consists of $2$ points at unit distance apart, then we denote $\alpha(V,\M)$ simply by $\alpha(V)$ for a shorthand. The current record on the aforementioned value of $\varepsilon$ in this case belongs to Raigorodskii~\cite{Rai3} who managed to prove the following.

\begin{Theorem} \label{th:rai}
	There is a sequence of subsets $V(n) \subset \R^n$, $n \in \N$, such that
	$ \frac{|V(n)|}{\alpha(V(n))} \ge \big(\psi_2+o(1)\big)^n,$
	where $\psi_2 = \sup\limits_{\scriptscriptstyle 0\le x \le 1} \frac{1+x+x^3}{1+x^2+x^4} = 1.239...$
\end{Theorem}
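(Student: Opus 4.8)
The plan is to realise $V(n)$ as a set of rescaled \emph{ternary vectors} and to estimate its independence number via the Frankl--Wilson polynomial method over a prime field. Fix $x\in(0,1)$, put $Z=1+x+x^3$ and $a=x/Z$, $b=x^3/Z$, $c=1/Z$, so that $a\ge b$ and $a+b+c=1$; let $\gamma$ be defined by $2\gamma=a+3b$. After replacing $a,b,c$ by nearby rationals with denominator dividing $n$ (a harmless $2^{o(n)}$ perturbation), let $V(n)\subset\{-1,0,1\}^n$ be the set of all vectors with exactly $an$ coordinates $+1$ and $bn$ coordinates $-1$, so by Stirling $|V(n)|^{1/n}\to \tfrac1{a^ab^bc^c}$, which, using $a+b+c=1$, equals $\tfrac{1+x+x^3}{x^{\,a+3b}}$. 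Pick a prime $p=p(n)$ with $\gamma n<p<2\gamma n$ (these exist with $p/n\to\gamma$ by Bertrand's postulate). After rescaling $V(n)$ by one fixed factor, two of its vectors $\U,\W$ are at unit distance in $\R^n$ precisely when $\langle\U,\W\rangle=\mu$, where I set $\mu:=(a+b)n-p$. The inner products attained by distinct members of $V(n)$ form the integer interval $[-2bn,\,(a+b)n-1]$; since $\gamma n<p<2\gamma n$ one checks that $\mu$ lies strictly inside it and is the \emph{only} attained value in its residue class modulo $p$, while $\langle\W,\W\rangle=(a+b)n\equiv\mu\pmod p$ for every $\W\in V(n)$.

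Next I would bound $\alpha:=\alpha\big(V(n)\big)$. Let $W\subseteq V(n)$ contain no two vectors at unit distance, so $\langle\U,\W\rangle\ne\mu$, hence $\langle\U,\W\rangle\not\equiv\mu\pmod p$ by the isolation of $\mu$, for all distinct $\U,\W\in W$. Writing $\mu'=\mu\bmod p$ and working over $\mathbb F_p$, attach to each $\W\in W$ the degree-$(p-1)$ polynomial $f_{\W}(\x)=\prod_{0\le j\le p-1,\ j\ne\mu'}\big(\langle\x,\W\rangle-j\big)$. Then $f_{\W}(\U)\equiv 0$ for distinct $\U,\W\in W$, whereas $f_{\W}(\W)=\prod_{j\ne\mu'}\big((a+b)n-j\big)\not\equiv 0$ since $(a+b)n\equiv\mu'$; so the evaluation matrix $\big(f_{\W}(\U)\big)_{\U,\W\in W}$ is diagonal with nonzero diagonal, and the $f_{\W}$ are linearly independent. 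Restricted to $\{-1,0,1\}^n$ and reduced via $x_i^3=x_i$, each $f_{\W}$ lies in the span of the monomials $\prod_i x_i^{e_i}$ with $e_i\in\{0,1,2\}$ and $\sum_i e_i\le p-1$; hence $|W|\le N$, the number of such monomials. Finally $N=\sum_{k\le p-1}[t^k](1+t+t^2)^n\le t^{-(p-1)}(1+t+t^2)^n$ for every $t\in(0,1]$, and the choice $t=x^2$ yields $\alpha\le N\le\big(\tfrac{1+x^2+x^4}{x^{\,2\gamma}}+o(1)\big)^n$.

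Dividing the two estimates and using $2\gamma=a+3b$, the powers of $x$ cancel, giving $\dfrac{|V(n)|}{\alpha(V(n))}\ge\Big(\dfrac{1+x+x^3}{1+x^2+x^4}+o(1)\Big)^{n}$ for every $x\in(0,1)$; taking $x$ to realise $\psi_2=\sup_x\tfrac{1+x+x^3}{1+x^2+x^4}$ gives the theorem. The delicate point is the choice of $\gamma$: isolation of $\mu$ modulo $p$ forces $p$ to be a bit larger than $\gamma n$ with $2\gamma=a+3b$ — and that is exactly what makes the exponent $2\gamma$ in the bound $\alpha^{1/n}\lesssim x^{-2\gamma}(1+x^2+x^4)$ equal the exponent $a+3b$ in $|V(n)|^{1/n}=x^{-(a+3b)}(1+x+x^3)$, so that the cancellation occurs and no efficiency is lost; using a larger $p$ would only inflate the monomial count $N$. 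The remaining issues — producing a prime $p\sim\gamma n$, clearing denominators so that $an,bn\in\Z$, and confirming that $\mu$ is attained by a distinct pair in $V(n)$ — are routine and cost only $2^{o(n)}$, and verifying that $x\mapsto\tfrac{1+x+x^3}{1+x^2+x^4}$ attains its maximum $1.239\ldots$ at an interior point of $(0,1)$ is a one-variable calculus exercise.
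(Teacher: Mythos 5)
Your proposal is correct and essentially reconstructs Raigorodskii's argument from the cited reference~\cite{Rai3} (the paper states \Cref{th:rai} with a citation and does not reprove it): ternary vectors in $\{-1,0,1\}^n$ of fixed composition, the Frankl--Wilson polynomial method over $\mathbb F_p$ with multilinearization via $x_i^3=x_i$, and an exponential bound on the number of admissible monomials, with the parameters coupled exactly as you take them, namely $a=x/Z$, $b=x^3/Z$, $c=1/Z$, $t=x^2$, and $p\sim\gamma n$ with $2\gamma=a+3b$, so that the powers of $x$ cancel. One cosmetic remark: you do not need the attained inner products of distinct vectors to \emph{fill} the interval $[-2bn,(a+b)n-1]$, only that they are \emph{contained} in it and that $\mu=(a+b)n-p$ is the unique value there in its residue class mod $p$, which your choice $\gamma n<p<2\gamma n$ (and $a\le 1/2$, automatic for $x\in(0,1)$) already guarantees.
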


More super-Ramsey configurations were found in~\cite{FranklRodl1987} by Frankl and R\"odl, who showed that the set of all $k+1$ vertices of the regular unit $k$-dimensional simplex $\triangle^k$ have this property for all $k \in \N$. In cases $k=2$ and $3$, we denote these sets by $\triangle$ and $\tetrahedron$ for a shorthand. Though Frankl and R\"odl did not provide explicit lower bounds on the corresponding chromatic numbers, Raigorodskii and Sagdeev~\cite{SagRai2019,Sag2019_FW} followed their argument (with some additional optimizations) and extracted the following quantitative result: $\chi(\R^n, \triangle) \ge \big(1.0140...+o(1)\big)^n$. However, as $k$ grows, the best lower bound on $\chi\big(\R^n, \triangle^k\big)$ that one can obtain via this technique decreases extremely rapidly\footnote{For instance, in~\cite{Sag2018_FR} Sagdeev only showed that $\chi(\R^n, \triangle^k) \ge \big(1+ 2^{-2^{k+4}}+o(1)\big)^n$.}.

Recently, Naslund~\cite{Naslund} used the slice-rank method (see~\cite{CLP,EllGij}) to give a very short argument leading to the following improvement of the lower bound on $\chi(\R^n, \triangle)$:
\begin{equation} \label{eq:chi_delta}
	\chi(\R^n, \triangle) \ge \big(1.0144...+o(1)\big)^n.
\end{equation}
It is not clear if one can get lower bounds on $\chi\big(\R^n, \triangle^k\big)$ for $k>2$ via similar technique.

Back to the classic results, in the paper~\cite{FrRod} Frankl and R\"odl found much more super-Ramsey configurations by proving the following `composition' statement.

\begin{Theorem} \label{th:prod}
	If $\mathcal{N}_1\subset \R^{d_1},\mathcal{N}_2 \subset \R^{d_2}$ are super-Ramsey, then so is $\mathcal{N}_1\times \mathcal{N}_2$.
\end{Theorem}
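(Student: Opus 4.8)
The plan is to realize a witness for $\mathcal{N}_1\times\mathcal{N}_2$ as a Cartesian product of witnesses for the two factors and to bound its independence number by a double‑counting argument. Fix the target dimension $n$, write $n=n_1+n_2$ with the split to be chosen at the very end, and let $V_1\subset\R^{n_1}$, $V_2\subset\R^{n_2}$ be sets supplied by the super‑Ramsey property of $\mathcal{N}_1$ and $\mathcal{N}_2$, so that $|V_i|/\alpha(V_i,\mathcal{N}_i)\ge(1+\varepsilon_i+o(1))^{n_i}$ while $|V_i|$ grows at most exponentially in $n_i$. Set $V:=V_1\times V_2\subset\R^{n_1}\oplus\R^{n_2}=\R^n$; then $|V|=|V_1|\,|V_2|$ is at most exponential in $n$. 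Since the product of an isometric embedding $\R^{d_1}\hookrightarrow\R^{n_1}$ with an isometric embedding $\R^{d_2}\hookrightarrow\R^{n_2}$ is again an isometric embedding, every set $C_1\times C_2$ with $C_i\subseteq V_i$ a congruent copy of $\mathcal{N}_i$ is a congruent copy of $\mathcal{N}_1\times\mathcal{N}_2$ inside $V$. Hence it is enough to prove that any $W\subseteq V$ avoiding all such ``axis‑aligned'' copies satisfies $|W|\le|V|\cdot(1+\varepsilon+o(1))^{-n}$ for a fixed $\varepsilon>0$.

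For $x\in V_1$ put $W^x=\{y\in V_2:(x,y)\in W\}$ and for $y\in V_2$ put $W_y=\{x\in V_1:(x,y)\in W\}$. Since $C_1\times C_2\subseteq W$ precisely when $C_2\subseteq\bigcap_{x\in C_1}W^x$, the hypothesis on $W$ says that for every congruent copy $C_1$ of $\mathcal{N}_1$ in $V_1$ the set $\bigcap_{x\in C_1}W^x$ contains no copy of $\mathcal{N}_2$, so $\big|\bigcap_{x\in C_1}W^x\big|\le\alpha(V_2,\mathcal{N}_2)$. Summing over all copies $C_1$ and switching the order of summation, I obtain $\sum_{y\in V_2}c(W_y)\le N_1\cdot\alpha(V_2,\mathcal{N}_2)$, where $c(U)$ is the number of congruent copies of $\mathcal{N}_1$ contained in $U\subseteq V_1$ and $N_1=c(V_1)$.

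Next I would insert a crude supersaturation estimate. With $p=|\mathcal{N}_1|$ and $\alpha_1=\alpha(V_1,\mathcal{N}_1)$, double counting the pairs (copy of $\mathcal{N}_1$, $(\alpha_1{+}1)$‑subset of $U$ containing it) gives $c(U)\ge\prod_{i=0}^{p-1}\frac{|U|-i}{\alpha_1+1-i}\ge(|U|/2\alpha_1)^{p}$ as soon as $|U|\ge2\alpha_1$ (and $\alpha_1\ge p$, which holds once $n_1$ is large, since otherwise $V_1$ would be a near‑regular simplex and could not have exponential rate). The $y$ with $|W_y|<2\alpha_1$ contribute at most $2\alpha_1|V_2|$ to $|W|$; for the remaining ones the displayed inequality gives $\sum_{y:\,|W_y|\ge2\alpha_1}|W_y|^{p}\le(2\alpha_1)^{p}N_1\,\alpha(V_2,\mathcal{N}_2)$, and Hölder's inequality with exponents $p$ and $p/(p-1)$ turns this into $|W|\le2\alpha_1|V_2|+2\alpha_1 N_1^{1/p}\alpha(V_2,\mathcal{N}_2)^{1/p}|V_2|^{1-1/p}$. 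Dividing by $|V|=|V_1|\,|V_2|$ and using $|V_1|/\alpha_1\ge(1+\varepsilon_1)^{n_1}$ and $\alpha(V_2,\mathcal{N}_2)/|V_2|\le(1+\varepsilon_2)^{-n_2}$, this rearranges (up to $(1+o(1))^{n}$ factors) to $\frac{|V|}{|W|}\ge\frac{(1+\varepsilon_1)^{n_1}}{2\big(1+N_1^{1/p}(1+\varepsilon_2)^{-n_2/p}\big)}$.

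It remains to absorb $N_1$, and this is where I expect the real (if elementary) difficulty to lie. Using only the trivial bound $N_1\le|V_1|^{p}\le c_1^{pn_1}$, where $c_1$ bounds the exponential growth of $|V_1|$, I would take $n_1=\theta n$ and $n_2=(1-\theta)n$ with $\theta>0$ small enough that $c_1^{\theta}<(1+\varepsilon_2)^{(1-\theta)/p}$; then $N_1^{1/p}(1+\varepsilon_2)^{-n_2/p}=o(1)$ and $\frac{|V|}{|W|}\ge\tfrac13(1+\varepsilon_1)^{\theta n}$ for all large $n$, so $\mathcal{N}_1\times\mathcal{N}_2$ is super‑Ramsey with $\varepsilon=(1+\varepsilon_1)^{\theta}-1>0$. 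The catch is that one has paid an entire exponential factor $N_1$ merely for the freedom in choosing the copy $C_1$, and the only way to dispose of it at this level of generality is a wildly unbalanced split $n_1\ll n_2$, so the $\varepsilon$ produced is minuscule — presumably the very reason this composition gives poor constants. A sharper version would instead count only copies of $\mathcal{N}_1$ that are well spread inside $V_1$ (e.g. taking $V_1$ to be a Cartesian power of a fixed small gadget, so a random copy $C_1$ meets the fibre densities $|W_y|/|V_1|$ almost independently), replacing $\sum_y c(W_y)\le N_1\alpha(V_2,\mathcal{N}_2)$ by a near‑equality $\sum_y c(W_y)\approx N_1\,\mathbb{E}_y\!\big(|W_y|/|V_1|\big)^{p}$ and using convexity; I expect that to yield a far better $\varepsilon$, at the price of quasirandomness hypotheses of the kind the rest of the paper is evidently designed to avoid.
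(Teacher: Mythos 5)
Your proof is correct and takes essentially the same route as the paper: take the product witness $V=V_1\times V_2$ with an unbalanced split of the dimension, double-count incidences between fibers of $W$ and congruent copies of one of the factors, and close the resulting inequality with a supersaturation estimate. By symmetry the two counting directions are equivalent: the paper counts, for each copy $Y$ of $\mathcal{N}_2$ in $V_2$, the layers $\mathbf{x}\in V_1$ with $\{\mathbf{x}\}\times Y\subset W$ (at most $\alpha(V_1,\mathcal{N}_1)$ each, at most $|V_2|^{|\mathcal{N}_2|}$ choices of $Y$), while you count, for each copy $C_1$ of $\mathcal{N}_1$ in $V_1$, the fibers $W_y$ containing $C_1$ (at most $\alpha(V_2,\mathcal{N}_2)$ each, at most $N_1\le|V_1|^{|\mathcal{N}_1|}$ choices of $C_1$). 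The one genuine difference is the supersaturation step: the paper uses the linear ``discard one point at a time'' estimate --- a subset $U\subset V_2$ of size exceeding $\alpha(V_2,\mathcal{N}_2)$ contains at least $|U|-\alpha(V_2,\mathcal{N}_2)$ copies of $\mathcal{N}_2$ --- which feeds directly into the sum over layers and yields inequality \eqref{eq:prod3} in one line, whereas you invoke the convexity bound $c(U)\ge(|U|/2\alpha_1)^{p}$ and therefore need H\"older's inequality to recover a linear bound on $\sum_y|W_y|$. Both versions then absorb the extraneous exponential factor ($|V_2|^{|\mathcal{N}_2|-1}$ in the paper, $N_1^{1/p}$ for you) by choosing a very lopsided $n_1:n_2$ ratio, and both therefore produce a minuscule $\varepsilon$; you diagnose this correctly, and it is precisely the inefficiency that the tree-like concatenation of \Cref{lemgt} and \Cref{lem:star} is designed to bypass. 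The linear supersaturation is simpler and marginally sharper, so the paper's write-up is cleaner, but conceptually the two proofs are the same argument.
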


Frankl and R\"odl applied this result to show that the vertex set of any non-degenerate simplex is super-Ramsey. These two theorems, together with a simple observation that any subset $\M \subset \mathcal{N}$ of a super-Ramsey set $\mathcal{N}$ inherits this property\footnote{Indeed, any copy of $\mathcal{N}$ also contains a copy of $\M$. Thus $\alpha(V,\M) \le \alpha(V, \mathcal{N})$ for all $V \subset \R^n$. Similarly, we have $\chi(\R^n,\M) \ge \chi(\R^n,\mathcal{N})$ for all $n \in \N$.}, give all known examples of super-Ramsey (and even exponentially Ramsey) sets.

In particular, note that $\tetrahedron \subset \cube$ and $\triangle^k \subset \square^{k+1}$ for all $k \in \N$, where $\square^k$ is the set of all $2^k$ vertices of the $k$-dimensional hypercube with side length of $1/\sqrt{2}$ and $\cube$ is simply $\square^3$. Hence, the product theorem of Frankl and R\"odl, applied several times to a super-Ramsey two-point configuration, provides an alternative way to prove that $\triangle^k$ is super-Ramsey for all $k \in \N$. Sagdeev~\cite{Sag2018_CartProd} showed that a careful choice of the auxiliary parameters in this proof yields that
\begin{align}
	\chi(\R^n, \tetrahedron) & \ge \big(1.0136...+o(1)\big)^n, \label{eq:chi_delta3} \\
	\chi\big(\R^n, \triangle^k\big) & \ge \Big(1+\frac{1}{(k+1)^2\cdot2^{k+1}}+o(1)\Big)^n \mbox{ for all } k \ge 4. \label{eq:chi_deltak}
\end{align}

However, \Cref{th:prod} gives quantitatively inefficient bounds in the case when a set $\M$ can only be embedded into a very large Cartesian product $\mathcal{N}$, in particular, when $\M$ is a simplex. In this paper, we give a much more efficient way to concatenate constructions, using tree-like structures instead of product structures. This allows us to improve the lower bounds on the chromatic number of Euclidean space with any forbidden non-degenerate simplex. In particular, we get the following substantial improvements upon all the inequalities~\eqref{eq:chi_delta}, \eqref{eq:chi_delta3}, and~\eqref{eq:chi_deltak}. 

\begin{Theorem} \label{cor:simpl}
	For all $k \in \N$,  we have
	\begin{equation*}
		\chi\big(\R^n, \triangle^k\big) \ge \big(\psi_2^{1/(k+1)}+o(1)\big)^n
	\end{equation*}
	as $n \to \infty$, where $\psi_2$ is from the statement of \Cref{th:rai}. In particular, we have
	\begin{equation*}
		\chi(\R^n, \triangle) \ge \big(1.0742...+o(1)\big)^n, \mbox{ and } \ \chi(\R^n, \tetrahedron) \ge \big(1.0551...+o(1)\big)^n.
	\end{equation*}
\end{Theorem}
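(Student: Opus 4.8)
The plan is to realize the vertex set of the regular simplex $\triangle^k$ as a "tree-like concatenation" of $k+1$ copies of a two-point configuration, rather than as a subset of the hypercube $\square^{k+1}$, and to show that this concatenation multiplies chromatic exponents in a much more efficient way than the Cartesian product of \Cref{th:prod}. Concretely, I would start from the sequence of sets $V(n)\subset\R^n$ of \Cref{th:rai}, for which $|V(n)|/\alpha(V(n))\ge(\psi_2+o(1))^n$, and build a set $W=W(N)\subset\R^N$ with $N\approx (k+1)n$ whose points are obtained by stacking $k+1$ blocks, each block lying (up to scaling) in a copy of $V(n)$, but arranged along a rooted tree of depth $k+1$ so that any $k+1$ points forming a common "root-to-leaf–type" selection are forced to realize a regular $k$-simplex of the prescribed side length. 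The key numerical input is that in such a tree construction the independence ratio behaves multiplicatively: $|W|/\alpha(W,\triangle^k)\ge \big(|V(n)|/\alpha(V(n))\big)^{n}$-worth of gain spread over $(k+1)n$ coordinates, which yields exactly $\psi_2^{1/(k+1)}$ per dimension after taking $N$-th roots. Combined with~\eqref{eq:chi_alpha2} (the pigeonhole bound $\chi(\R^N,\triangle^k)\ge |W|/\alpha(W,\triangle^k)$), this gives the claimed lower bound $\chi(\R^n,\triangle^k)\ge(\psi_2^{1/(k+1)}+o(1))^n$.

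The first concrete step is to set up the geometry: choose the scaling of the blocks so that two points of $W$ that "branch at level $i$" are at a fixed distance independent of $i$, which is what guarantees that a selected $(k+1)$-tuple is a genuine regular simplex and not merely a simplex with unequal edges; here one uses that the distance between two points differing in exactly one block is determined by that block's scale. The second step is the counting/independence estimate, which is the heart of the argument: one must show that an independent set $W'\subset W$ (no monochromatic $\triangle^k$) projects, block by block, to independent sets in the relevant copies of $V(n)$, so that $\alpha(W,\triangle^k)\le \prod \alpha(V(n))$ up to lower-order factors, while $|W|$ is essentially $\prod |V(n)|$; taking roots over $N=(k+1)n$ coordinates produces the exponent $\psi_2^{1/(k+1)}$. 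The third, routine, step is to plug in $k=2$ and $k=3$ and compute $\psi_2^{1/3}=1.0742\ldots$ and $\psi_2^{1/4}=1.0551\ldots$ numerically, and to verify that the $o(1)$ terms coming from the approximations $N\approx(k+1)n$ and from the $o(1)$ in \Cref{th:rai} are absorbed correctly.

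The main obstacle I anticipate is the second step — proving that the tree-like concatenation really does give the multiplicativity $\alpha(W,\triangle^k)\le \prod_i \alpha(V_i)$ (up to subexponential error), i.e. that a large $\triangle^k$-free subset of $W$ must be "large $\triangle$-free in each block simultaneously." Unlike the Cartesian product, where one forbids a single copy of $\mathcal N_1\times\mathcal N_2$ at a time and a slicing/averaging argument suffices, here the combinatorial structure is a tree and one has to rule out configurations that use points from several branches; the delicate point is to make sure that the embedding is faithful — that the only isometric copies of $\triangle^k$ inside $W$ are the intended "tree-aligned" ones, so that avoiding them forces the block-wise independence. Handling near-copies and degenerate alignments (possibly by choosing the scalars generic, or by a perturbation/rigidity argument) is where the real work lies; once that faithfulness is established, the exponent count and the specialization to $\triangle$ and $\tetrahedron$ are straightforward.
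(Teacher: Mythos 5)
Your high-level idea — avoid the Cartesian-product embedding and instead build a more efficient tree-like concatenation of $k+1$ copies of the two-point configuration — is exactly the paper's idea, and your final exponent $\psi_2^{1/(k+1)}$ matches. However, the quantitative mechanism you describe in step two is wrong, and in fact it contradicts your own first paragraph. You claim the heart of the argument is a multiplicative independence bound $\alpha(W,\triangle^k)\le\prod_i\alpha(V_i)$ with $|W|=\prod_i|V_i|$. If that were true, you would get $|W|/\alpha(W,\triangle^k)\ge\prod_i|V_i|/\alpha(V_i)=(\psi_2+o(1))^{(k+1)n}$ over $(k+1)n$ coordinates, i.e.\ a bound of $(\psi_2+o(1))^n$ on the chromatic number — far stronger than what you (correctly) announce. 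And the multiplicative bound is simply false: taking $W=V^{k-1}\times A$ for an independent set $A\subset V$ gives a set with no orthogonal star (hence no copy of the semicross/simplex inside the product construction) of size $|V|^{k-1}|A|\gg\alpha(V)^{k}$. A $\triangle^k$-free subset of the product does \emph{not} have to be independent in each block simultaneously; it suffices that for every potential center, at least one coordinate is ``stuck.''

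The correct bound, which the paper proves as \Cref{lem:star} (via the tree-homomorphism \Cref{lemgt} applied to a star $T$), is \emph{additive} in density: if $W\subset V^m$ contains no orthogonal star then $|W|/|V|^m\le m\,\alpha(V)/|V|$, i.e.\ $\alpha(V^m,\SC^m)\le m|V|^{m-1}\alpha(V)$. Dividing, one gets only a \emph{single} factor of $|V|/\alpha(V)$, up to the polynomial factor $m$, and that one factor of $\psi_2^n$ is then spread over $mn$ coordinates — this is precisely why the exponent is $\psi_2^{1/m}$ and not $\psi_2$. With $m=k+1$ (so that the $(k+1)$-semicross $\SC^{k+1}=\{\mathbf 0,\e_1,\dots,\e_{k+1}\}$ contains the $k+1$ points $\e_1,\dots,\e_{k+1}$, a regular $k$-simplex of side $\sqrt 2$), one gets $\chi(\R^n,\triangle^k)\ge\chi(\R^n,\SC^{k+1})\ge(\psi_2^{1/(k+1)}+o(1))^n$. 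Note also that the faithfulness/rigidity worry you raise in your last paragraph is a red herring: one never needs to show that \emph{every} isometric copy of $\triangle^k$ in $W$ is ``tree-aligned.'' One only needs the one-directional implication that an orthogonal star in $V^{k+1}$ \emph{is} a semicross (which follows from orthogonality of the coordinate blocks), hence contains a $\triangle^k$; so a $\triangle^k$-free set has no orthogonal star, and the lemma applies. Finally, the relevant tree $T$ in \Cref{lemgt} is a star with $k+1$ edges (all sharing one center), not a rooted tree of depth $k+1$ — the depth-$(k+1)$ tree (a path) would produce a different configuration and is used elsewhere in the paper, not here.
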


\vspace{1mm}

Observe that, as $k \to \infty$, the base of the exponent $\psi_2^{1/(k+1)} = 1+ \frac{0.214...}{k}+ O\big(\frac{1}{k^2}\big)$ tends to $1$ with roughly `linear' speed. So, the statement of \Cref{cor:simpl} is not only significantly stronger than the inequality~\eqref{eq:chi_deltak}, but also than the bound $\chi_{\mbox{\footnotesize \normalfont m}}\big(\R^n, \triangle^k\big) \ge \big(1+(3k-3)^{-2}+o(1)\big)^n$ on the \textit{measurable chromatic number} recently obtained for all $k\ge 3$ in~\cite{CSSV}.

We also provide explicit exponential lower bounds on the chromatic numbers of Euclidean space with forbidden right and acute triangles. Note that the case of an obtuse triangle requires some additional technical propositions from~\cite{FranklRodl1987}, and so we omit it in the present paper for simplicity.

\begin{Theorem} \label{cor:triangles}
	Let $\mathcal{R}$ and $\mathcal{A}$ be the sets of vertices of a right and an acute triangles, respectively. Then we have
	\begin{equation*}
		\chi(\R^n, \mathcal{R}) \ge \big(1.1133...+o(1)\big)^n, \mbox{ and } \ \chi(\R^n, \mathcal{A}) \ge \big(1.0742...+o(1)\big)^n.
	\end{equation*}
\end{Theorem}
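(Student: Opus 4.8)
The plan is to reduce both statements to the simplex case already handled by \Cref{cor:simpl} and \Cref{th:rai}, by exhibiting the vertex set of a right (resp.\ acute) triangle as a \emph{subset} of a small non-degenerate simplex, and then to optimize the resulting tree-like concatenation. Concretely, since any subset of a super-Ramsey set inherits super-Ramseyness together with the quantitative bound, it suffices to find, for each target triangle $\mathcal R$ or $\mathcal A$, a simplex $\triangle^k$ of appropriate edge length (or rather a scaled copy of it) containing an isometric copy of the triangle among three of its $k+1$ vertices, and to track how the scaling interacts with the base $\psi_2^{1/(k+1)}$ of the exponent. The key point is that the right triangle with legs of a suitable length embeds into $\square^2$ (a face of the unit-edge hypercube lattice) — indeed, a right isoceles triangle is three of the four vertices of a square — so $\mathcal R \subset \square^2 = \triangle^1 \times \triangle^1$ after appropriate scaling, and one applies the tree-concatenation machinery with $k$ chosen to make the bound largest, landing at $1.1133\ldots$; similarly an acute triangle embeds into $\triangle^2 = \triangle$, giving the same $1.0742\ldots$ as in \Cref{cor:simpl}.

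First I would record the elementary geometric embeddings precisely: scaling so that the forbidden right triangle has vertices $(0,0),(a,0),(0,b)$, observe this is three vertices of an axis-parallel rectangle, hence a subset of a $2$-dimensional box $\mathcal N_1 \times \mathcal N_2$ with $\mathcal N_i$ two-point sets at distances $a$ and $b$; each two-point set is trivially super-Ramsey, and by \Cref{th:rai} (applied to the appropriately scaled unit-distance problem, noting scaling does not affect the chromatic number) carries the constant $\psi_2$. Then I would feed these into the tree-like concatenation developed earlier in the paper — the same engine behind \Cref{cor:simpl} — but now with only \emph{two} factors rather than $k+1$, and with the freedom to subdivide further: the right triangle also sits inside larger simplices $\triangle^k$ if one allows extra vertices, and optimizing over $k$ (balancing the gain from more structure against the $\frac{1}{k+1}$ in the exponent and the edge-length constraints forced by the embedding) produces the stated $1.1133\ldots$.

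Next, for the acute triangle I would use that an acute triangle is already a non-degenerate $2$-simplex, so it \emph{is} a scaled copy of $\triangle = \triangle^2$ up to an affine equivalence — but isometry matters here, so the honest statement is that an acute triangle, being non-degenerate, is the vertex set of a genuine (possibly non-regular) $2$-simplex, and Frankl and R\"odl's method (or its tree-like refinement) applies to non-regular simplices as well; the quantitatively optimal choice again turns out to match the regular case, yielding $\psi_2^{1/3} = 1.0742\ldots$. I would verify that the non-regularity does not degrade the bound by checking that the relevant construction only uses the two-point sub-configurations along the tree, whose distances can be arbitrary, so the worst case is governed solely by the number of leaves, which is $3$.

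The main obstacle I expect is bookkeeping the edge lengths through the concatenation: the tree-like construction builds the forbidden configuration out of scaled copies of two-point sets glued along a tree, and one must check that \emph{every} triangle of the prescribed shape (up to isometry, hence up to the choice of the ordered triple of edge lengths) can be realized as a sub-configuration of the concatenated set for a suitable assignment of scaling parameters, and that among all such realizations there is one for which the edge lengths satisfy the constraints under which \Cref{th:rai}'s bound $\psi_2$ is attained. For the right triangle this is where the extra factor $1.1133\ldots > 1.0742\ldots$ comes from — the right angle gives an additional degree of freedom (the $\square^2$ embedding) that the acute case lacks — so the delicate part is proving that this freedom is genuinely exploitable within the tree framework and is not washed out by a suboptimal balancing of parameters. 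Everything else (the passage from a single construction $V(n)$ to the chromatic-number bound via \eqref{eq:chi_alpha2}, the $o(1)$ asymptotics) is routine given the earlier results.
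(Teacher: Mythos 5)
Your overall framework — reduce to the scaled semicross bound $\chi\big(\R^n, \SC^k(\lambda_1,\dots,\lambda_k)\big) \ge \big(\psi_2^{1/k}+o(1)\big)^n$ with the right number of arms — is indeed the paper's route, and you land on the correct constants $\psi_2^{1/2}=1.1133\ldots$ and $\psi_2^{1/3}=1.0742\ldots$. But the two concrete embeddings that make this work are either missing or replaced by incorrect statements.

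For the right triangle, the detour through a rectangle and ``optimization over $k$'' both obscures the point and introduces an error: a right triangle does \emph{not} sit inside any regular $\triangle^k$ (a regular simplex has all edges equal), and there is nothing to optimize — a right triangle with legs $a,b$ simply \emph{is} the scaled $2$-semicross $\SC^2(a,b)$, that is, the center together with the two leaves of an orthogonal star on two arms. That is the whole content of the first bound; the box plays no role.

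For the acute triangle, you never actually construct the embedding, and the appeal to ``Frankl--R\"odl's method applies to non-regular simplices'' points at the wrong tool. What is needed is to place the acute triangle among the three \emph{leaves} (not the center) of a scaled $3$-semicross $\SC^3(\lambda_1,\lambda_2,\lambda_3)$, choosing
\begin{equation*}
\lambda_1 = \sqrt{(a^2+b^2-c^2)/2},\quad \lambda_2 = \sqrt{(b^2+c^2-a^2)/2},\quad \lambda_3 = \sqrt{(c^2+a^2-b^2)/2},
\end{equation*}
so that the leaf distances $\|\lambda_i\e_i - \lambda_j\e_j\| = \sqrt{\lambda_i^2+\lambda_j^2}$ reproduce the side lengths $a,b,c$. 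Acuteness is exactly what makes all three radicands positive, so this is where the hypothesis enters; and the exponent $1/3$ comes from the number of arms of the semicross, not from the number of vertices of the forbidden triangle, a distinction your ``number of leaves, which is $3$'' remark blurs. Your instinct that the construction only sees two-point sub-configurations along the tree with freely chosen lengths is the right philosophy, but without writing down the $\lambda_i$ you cannot verify the embedding exists, nor that it needs $3$ arms rather than $2$.
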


Let us conclude this sections with a few remarks on the corresponding upper bounds. In case of the `classical' chromatic number $\chi(\R^n)$, the best result belongs to Larman and Rogers~\cite{Lar, Pros2020_LR} who showed that $\chi(\R^n) \le \big(3+o(1)\big)^n$. For all finite $\M \subset \R^d$, Prosanov~\cite{Pros2018_ExpRams} found an explicit $c(\M)\le 3$ such that $\chi(\R^n,\M) \le \big(c(\M)+o(1)\big)^n$. However, these upper bounds usually asymptotically much larger than the best current lower bounds. For instance, in case of an equilateral triangle it is only known that $\chi(\R^n, \triangle) \le \big(2.733+o(1)\big)^n$. Erd\H{o}s et al. showed that $\M$ is not Ramsey, i.e. that $\chi(\R^n,\M)$ is bounded from above by some constant independent of $n$, whenever $\M$ is non-spherical~\cite{EGMRSS1} or infinite~\cite{EGMRSS2}. Nevertheless, the classification of Ramsey configurations is a wide open problem, and there are even two different conjectures upon the answer, see~\cite{Leader}. Finally, note that it is unknown if there is a Ramsey set that is not exponentially (or super-) Ramsey.

\subsection{Weak sunflowers} \label{sec1:sun}

A collection of $k\ge 3$ sets is called a \textit{$k$-sunflower} if all their pairwise intersections coincide. Erd\H{o}s and Rado~\cite{ErdRad1960} introduced this notion, proved that any family $\F$ of $r$-element sets with no $k$-sunflower satisfies $|\F| \le r!(k-1)^r$, and conjectured that this bound can be further improved to $|\F| \le C^r$ for some $C=C(k)$ depending only on $k$. Their conjecture remains one of the most famous problems in modern Combinatorics and is wide open even in case $k=3$. Recently, there was a breakthrough result due to  Alweiss, Lovett, Wu and Zhang~\cite{ALWZ} that significantly improved the upper bound.

Various analogues, generalizations, and related notions have been extensively studied over the past decades including the following one. A collection of $k\ge 3$ sets is called a \textit{weak $k$-sunflower} if all their pairwise intersections are of the same cardinality. Let $G_k(n)$ stands for the maximum size of a family $\F \subset 2^{[n]}$ that contains no weak $k$-sunflowers. Kostochka and R\H{o}dl~\cite{KosRodl1998} proved that
\begin{equation} \label{eq:snflwr1}
	G_k(n)\ge k^{c(n\log n)^{1/3}}
\end{equation}
for some absolute constant $c>0$, and their result remains the best known.

Concerning the upper bounds, there was a conjecture of Erd\H{o}s and Szemer\'edi~\cite{ErdSzem1978} stating that $G_3(n) \le (2-\varepsilon)^n$ for a sufficiently small positive $\varepsilon$. It was also proved by  Frankl and R\"odl~\cite{FranklRodl1987}. In fact, they proved that for any $k \ge 3$ there exists a sufficiently small $\varepsilon_k>0$ such that $G_k(n) \le \big(2-\varepsilon_k+o(1)\big)^n$ asymptotically. Frankl and R\"odl did not provide explicit estimates for these $\varepsilon_k$ but, as $k$ grows, one cannot hope to achieve values larger than $ 2^{-2^{k+o(k)}}$ using their technique, see~\cite{Sag2018_FR}, and even in case $k=3$ the optimal value of $\varepsilon_3$ would be smaller than $0.01$.

Naslund~\cite{Naslund22} substantially improved upon this bound for $k=3$ by showing that
\begin{equation} \label{eq:snflwr_nas}
	G_3(n) \le \big(1.837+o(1)\big)^n.
\end{equation}
His proof was based on a relatively recent breakthrough of Ellenberg and Gijswijt~\cite{EllGij} in the capset problem via the slice-rank method. See also the prior paper~\cite{NasSaw2017} of Naslund and Sawin on the sunflower problem and the original celebrated result of Croot, Lev, and Pach~\cite{CLP} for more examples of using the slice-rank method in Combinatorics. Nevertheless, it is unknown if one can apply this argument to get a non-trivial upper bound on $G_k(n)$ for $k>3$.

In the present paper, we show that our simple ideas from \Cref{sec2:tool} combined with the classic theorem by Frankl and Wilson~\cite{FranklWil1981}  lead to the following exponential upper bound on $G_k(n)$.

\begin{Theorem} \label{cor:snflwr}
	For all $k \ge 3$, if $\F \subset 2^{[n]}$ contains no weak $k$-sunflowers, then
	\begin{equation*}
		|\F| \le \big(2\psi^{-1/k}+o(1)\big)^{n}
	\end{equation*}
	as $n \to \infty$, where $\psi = \frac{1+\sqrt{2}}{2} = 1.207...$
\end{Theorem}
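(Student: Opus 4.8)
The plan is to reduce to uniform families, exhibit a ``cut-corner'' family of sunflower witnesses that must be missing, apply the Frankl--Wilson theorem block by block, and aggregate the local estimates via the tool of \Cref{sec2:tool}. First I would reduce to the uniform case: if $\F\subseteq 2^{[n]}$ has no weak $k$-sunflower then neither does any layer $\F_r=\{A\in\F\colon |A|=r\}$, and $|\F|\le(n+1)\max_r|\F_r|$ with $(n+1)^{1/n}\to1$, so it suffices to bound $r$-uniform weak-$k$-sunflower-free families uniformly in $r$.

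Next, fix a prime $p$ of order $n/k$, split $[n]$ into $k$ blocks $P_1,\dots,P_k$ (not necessarily of equal size) and, at the cost of a subexponential factor, pass to the sub-family of sets meeting each $P_j$ in exactly $2p-1$ points; write $A=(a_1,\dots,a_k)$ with $a_j=A\cap P_j$. The corner-cutting observation is this: if there are a ``base'' tuple $b=(b_1,\dots,b_k)$ and sets $A^1,\dots,A^k\in\F$ such that $A^i$ coincides with $b$ outside $P_i$ while $|a^i_i\cap b_i|=p-1$, then $A^1,\dots,A^k$ is a weak $k$-sunflower, since every pairwise intersection equals $(k-2)(2p-1)+2(p-1)$ (and the $A^i$ are distinct because $|a^i_i\cap b_i|<|a^i_i|$). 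Hence for every base $b$ there is a block $j=j(b)$ in which the admissible sets $a^j_j$, together with $b_j$, form a $(2p-1)$-uniform family on $P_j$ with no two members meeting in exactly $p-1$ points; by the Frankl--Wilson theorem this family has size at most $\binom{|P_j|}{p-1}$. Crucially we never forbid a whole Cartesian product of two-point configurations, only this tree-shaped family of sunflower witnesses glued to a common base --- this is exactly where the approach departs from \Cref{th:prod}.

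The final step is to aggregate. Summing over bases naively only controls those $b$ with $j(b)$ equal to a fixed block, a mere fraction of all bases; the remaining bases must be fed back into the same construction, and the iteration of \Cref{sec2:tool} carries this out while losing only a $2^{o(n)}$ factor. Plugging in the quantitative form of the Frankl--Wilson bound, namely $\binom{m}{2p-1}/\binom{m}{p-1}\ge(\psi+o(1))^m$ for $m=(4+2\sqrt2+o(1))p$ --- this is where $\psi=\sup_{0\le x\le1}\frac{1+x}{1+x^2}=\frac{1+\sqrt2}{2}$ enters --- and optimizing the uniformity and the block sizes against the trivial bound $\binom nr\le2^n$, one obtains $|\F_r|\le(2\psi^{-1/k}+o(1))^n$ uniformly in $r$, and the uniform reduction finishes the proof.

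The main obstacle is precisely this aggregation: the per-base deficiency only ``saves'' one block's worth of the Frankl--Wilson gain, so one must run the recursion carefully enough that it neither destroys this factor of order $\psi^{n/k}$ nor accumulates more than a subexponential error, while simultaneously tuning the free parameters (the prime $p$, the per-block uniformities, the generally unequal block sizes) so that the extremal value comes out to be the clean quantity $2\psi^{-1/k}$. The remaining points --- existence of a prime of the prescribed size, divisibility of $n$ by $k$, and the passage to exact per-block uniformity --- are routine and absorbed into the $o(1)$.
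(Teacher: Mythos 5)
Your high-level architecture matches the paper's: reduce to a uniform layer, partition $[n]$ into $k$ blocks, set up a per-block forbidden-intersection graph so that an orthogonal star gives a weak $k$-sunflower, apply the Frankl--Wilson bound block by block, and aggregate via \Cref{lem:star} (the ``tree-like concatenation''). The parenthetical remark that the admissible leaf coordinates ``together with $b_j$ form a family with no two members meeting in exactly $p-1$ points'' is stated in a confusing way (the forbidden-intersection condition actually governs the set of ``bad centers'' restricted to a slice, not the admissible leaves themselves), but since you invoke \Cref{sec2:tool} as a black box I will treat that step as essentially correct.

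The real gap is in the last step, where you claim that ``optimizing the uniformity and the block sizes against the trivial bound $\binom nr\le 2^n$'' yields $|\F_r|\le(2\psi^{-1/k}+o(1))^n$ \emph{uniformly in $r$}. This does not work. You only use the classical Frankl--Wilson configuration: $(2p-1)$-uniform blocks with forbidden per-block intersection $p-1$, i.e.\ $\sigma=\rho/2$. The expression $H(\rho)-H(\rho/2)$ is maximized at $\rho=(2-\sqrt2)/2\approx0.293$, where it equals $\ln\psi$; this is exactly your $m=(4+2\sqrt2)p$ optimization. But when you apply \Cref{lem:part}, the per-block uniformity $r_j$ is forced to satisfy $\sum_j r_j=r$, so $\rho\approx r/n$, not a free parameter. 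At $\rho=1/2$ the best your configuration gives is $H(1/2)-H(1/4)=\ln 2-H(1/4)\approx 0.131$, not $\ln\psi\approx 0.188$. Plugging into the required inequality $(1-\tfrac1k)H(\rho)+\tfrac1k H(\rho/2)\le\ln2-\tfrac1k\ln\psi$, the left side at $\rho=1/2$ is $\ln2-0.131/k$ while the right side is $\ln2-0.188/k$, so the inequality fails. Your claimed uniform-in-$r$ bound is therefore false for the hardest range $r\approx n/2$, and the trivial bound $\binom nr\le2^n$ does not rescue it there, since $\binom{n}{n/2}=2^{n(1+o(1))}$.

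The paper fixes this in two ways that your argument omits. First, it uses a \emph{shift trick}: replace $\F_r$ by $\F'=\{F\triangle G:F\in\F\}\cap\binom{[n]}{n/2}$ for a pigeonholed $G$; this preserves weak-sunflower-freeness and pins the uniformity at $n/2$, so only the case $\rho=1/2$ has to be analyzed, and the factor $2^n/\binom{n}{n/2}$ is subexponential. Second, and more importantly, at $\rho=1/2$ the optimal forbidden intersection lies in the regime $\sigma>\rho/2$ (namely $\sigma=\sqrt2/4$), which is not covered by the plain $\binom{m}{2p-1}/\binom{m}{p-1}$ computation; there one needs the Ponomarenko--Raigorodskii bound (the second case of \Cref{th:FWPR}), which is exactly how the paper gets $\delta(1/2,\sqrt2/4)^{-1}=\psi$. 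Without that second regime, the constant $\psi=\frac{1+\sqrt2}{2}$ is simply not attainable at uniformity $n/2$, and the statement as claimed does not follow.
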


Observe that in case $k=3$, this theorem only implies that $G_3(n) \le \big(1.879+o(1)\big)^n$, while \eqref{eq:snflwr_nas} provides a better estimate. Nevertheless, as $k \to \infty$, the value $2\psi^{-1/k} = 2-\frac{0.376...}{k}+ O\big(\frac{1}{k^2}\big)$ tends to $2$ with roughly `linear' speed, and greatly improves upon the bounds following form the prior techniques for all $k \ge 4$.

\subsection{Frankl--R\"odl from Frankl--Wilson} \label{sec1:FR}

Given $r<n$, let $\F \subset \binom{[n]}{r}$ be a family of $r$-element subsets of $[n]$. This family is said to be {\it $s$-avoiding} if $|F_1\cap F_2| \neq s$ for all $F_1,F_2 \in \F$. It is easy to check that $\F \subset \binom{[n]}{r}$ is $s$-avoiding if and only if $\overline{\F}\coloneqq\big\{ \overline{F}: F \in \F\big\} \subset \binom{[n]}{n-r}$ is $(n-2r+s)$-avoiding.  Hence, we can assume without loss of generality that $r \le n/2$.
In the present section, we assume that both $r$ and $s$ grow linearly with $n$, i.e. that $r\sim\rho n$ and $s\sim\sigma n$ for some $0<\sigma<\rho\le 1/2$ as $n \to \infty$. Under these conditions, Frankl and Wilson~\cite{FranklWil1981} applied polynomial method to show that if $r-s$ is prime or a prime power, then the size of any $s$-avoiding family is exponentially smaller than $\binom{n}{r}$. Their upper bound is known to be asymptotically tight if $\sigma\le\rho/2$, see \cite{BobKupRai2016}. However, in case $\sigma>\rho/2$, currently the best upper bound is due to Ponomarenko and Raigorodskii~\cite{PonRai2013}, but its tightness is indefinite. We summarize these bounds in the following statement. (The asymptotic equivalence between the original technical result of Ponomarenko and Raigorodskii and its simpler form that we use here was shown in~\cite{RaiSag2018}.)

%\textcolor{red}{This theorem also looks kind of ugly, I'm not sure we need it to take so much space.}
\begin{Theorem} \label{th:FWPR}
	Given positive $\sigma<\rho\le 1/2$, assume that $r=r(n)\sim \rho n$, $s=s(n)\sim\sigma n$ as $n \to \infty$, and that $r-s$ is prime or a prime power for all $n$. Then any $s$-avoiding family $\F \subset \binom{[n]}{r}$ satisfies
	\begin{equation*}
		|\F| \le \binom{n}{r}\big(\delta(\rho,\sigma)+o(1)\big)^n,
	\end{equation*}
	where
	\begin{equation*}
		\delta(\rho,\sigma) =
		\begin{cases}
			\exp\big\{H(\rho-\sigma)-H(\rho)\big\} & \mbox{if } \sigma < \rho/2; \\
			\exp\big\{H(\rho-\sigma)-H(2\rho-2\sigma)\big\} & \mbox{if } \sigma \ge \rho/2,
		\end{cases}
	\end{equation*}
	and $H(x) = -x\ln(x)-(1-x)\ln(1-x)$ is the entropy function.
\end{Theorem}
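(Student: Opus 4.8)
\emph{Proof idea.} The bound packages together two results, one for each of the two ranges in the definition of $\delta(\rho,\sigma)$, proved by different methods; I would handle them in turn.

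\emph{The range $\sigma<\rho/2$.} Put $q\coloneqq r-s$; by hypothesis $q$ is a prime power, and since $2s<r$ for all large $n$ one has $0<s<q$. The plan is to apply the Frankl--Wilson theorem~\cite{FranklWil1981} to the uniform family $\F$ over $\mathbb F_q$ with the residue set $\mathcal L\coloneqq\{0,1,\dots,q-1\}\setminus\{s\}$, which has $q-1$ elements. Two checks are needed. First, $r=q+s$ gives $r\equiv s\pmod q$, so $r\bmod q=s\notin\mathcal L$. Second, for distinct $F,F'\in\F$ the intersection size $|F\cap F'|$ lies in $\{0,1,\dots,r-1\}$ and differs from $s$ because $\F$ is $s$-avoiding, while the only integers in $\{0,1,\dots,r-1\}$ congruent to $s$ modulo $q$ would be $s-q=2s-r<0$ and $s+q=r$, both out of range; hence $|F\cap F'|\bmod q\in\mathcal L$. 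The Frankl--Wilson theorem therefore yields $|\F|\le\binom{n}{q-1}$, and since $q-1\sim(\rho-\sigma)n$ the usual entropy estimate gives $\binom{n}{q-1}=\binom{n}{r}\big(\exp\{H(\rho-\sigma)-H(\rho)\}+o(1)\big)^n$, which is the assertion in this range.

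\emph{The range $\sigma\ge\rho/2$.} Now $2s-r\ge0$, so the integer $2s-r$ is an admissible intersection size that is congruent to $s$ modulo $q=r-s$ yet not forbidden by the $s$-avoiding condition; thus $\F$ may genuinely meet the residue class of $s$, and the argument above breaks down. In this regime I would invoke the refined polynomial-method estimate of Ponomarenko and Raigorodskii~\cite{PonRai2013}, in the simplified equivalent form established in~\cite{RaiSag2018}, which produces exactly the factor $\exp\{H(\rho-\sigma)-H(2\rho-2\sigma)\}$; this is consistent with the previous range at the boundary $\sigma=\rho/2$, where $2\rho-2\sigma=\rho$.

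\emph{Main obstacle.} The sparse range is essentially a one-line consequence of Frankl--Wilson; the dense range $\sigma\ge\rho/2$ is where the work lies. The obvious remedies do not recover the sharp constant: for instance, fixing a common core $A$ of size $\approx(2\sigma-\rho)n$, passing to the subfamily of members of $\F$ containing $A$ at the cost of a factor $\binom{n}{|A|}/\binom{r}{|A|}$, and reducing to the sparse case on the ground set $[n]\setminus A$ with parameters $r-|A|$, $s-|A|$ does give an exponential bound, but a short computation shows it is strictly weaker than $\exp\{H(\rho-\sigma)-H(2\rho-2\sigma)\}$. Getting the latter is precisely what the Ponomarenko--Raigorodskii analysis achieves.
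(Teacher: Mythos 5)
Your proposal is correct and takes the same route as the paper: the paper does not reprove this theorem but simply cites Frankl--Wilson for the sparse range and Ponomarenko--Raigorodskii (in the simplified form from Raigorodskii--Sagdeev) for the dense range. You spell out the Frankl--Wilson application more explicitly than the paper does — residue set $\mathcal L=\{0,\dots,q-1\}\setminus\{s\}$ with $q=r-s$, giving $|\F|\le\binom{n}{q-1}$ — and that calculation is sound, including the observation that the argument breaks at $\sigma\ge\rho/2$ because $2s-r$ becomes a legitimate unforbidden intersection size in the residue class of $s$.
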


Frankl and R\"odl proved in their seminal paper~\cite{FranklRodl1987} that the size of any $s$-avoiding family is still  exponentially smaller than $\binom{n}{r}$ even if $r-s$ is neither prime nor a prime power. However, it is rather a laborious task to extract some quantitative bounds from their proof (which they prudently omitted). Keevash and Long~\cite{KeevLong2017} have recently showed that this general statement in fact follows from the case of prime $r-s$ settled by Frankl and Wilson~\cite{FranklWil1981} combined with the idea that any natural number can be represented as a sum of at most $4$ primes. We show that our simple ideas from~\Cref{sec2:tool} combined with the ideas of Keevash and Long give a shorter and more efficient reduction.

\begin{Theorem} \label{cor:FR}
	Given positive $\sigma<\rho\le 1/2$, assume that $r=r(n)\sim \rho n$, $s=s(n)\sim\sigma n$ as $n \to \infty$. Put $c=3$ if $r-s$ is odd, and $c=4$ otherwise. Then any $s$-avoiding family $\F \subset \binom{[n]}{r}$ satisfies
	\begin{equation*}
		|\F| \le \binom{n}{r}\big(\delta(\rho,\sigma)+o(1)\big)^{n/c},
	\end{equation*}
	where $\delta(\rho,\sigma)$ is from the statements of \Cref{th:FWPR}.
\end{Theorem}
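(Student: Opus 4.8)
The plan is to reduce the case of arbitrary $r-s$ to the prime-power case of \Cref{th:FWPR} by "splitting" the ground set into $c$ pieces and applying the prime-power result inside each piece, then gluing the information together. The starting observation, due to Keevash and Long, is that any positive integer $m=r-s$ can be written as a sum $m = m_1 + \dots + m_c$ of $c$ primes, where $c=3$ suffices if $m$ is odd (by Vinogradov / the ternary Goldbach theorem, now a theorem of Helfgott, though for our asymptotic purposes one only needs it for large $m$) and $c=4$ suffices in general. Moreover one can arrange the split so that each $m_i$ is roughly $m/c$, i.e. $m_i \sim (\rho-\sigma)n/c$; this balanced choice is what will make the exponents come out right. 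Write $\sigma' = \sigma/c$ and note $\rho' := \sigma' + (\rho-\sigma)/c = \rho/c$, so that each "piece" looks like an instance of the problem with parameters $(\rho/c, \sigma/c)$ on a ground set of size $n/c$, and $\delta(\rho/c,\sigma/c) = \delta(\rho,\sigma)$ since the function $\delta$ is homogeneous of degree zero under simultaneous scaling of both arguments (as is clear from the definition via $H$, because $H(\lambda x)/$ itself is not scale invariant — wait, here one uses that both numerator and denominator entropy terms scale the same way in the exponent per unit of ground set; this is exactly why the $n/c$ appears in the conclusion).

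The key step is the tree-like / tensor construction from \Cref{sec2:tool}. Given an $s$-avoiding family $\F \subset \binom{[n]}{r}$, partition $[n] = I_1 \sqcup \dots \sqcup I_c$ with $|I_j| \sim n/c$. For each $F \in \F$, record the trace $(F\cap I_1, \dots, F\cap I_c)$. Since $|F\cap I_j|$ can be rounded to roughly $r/c$, after passing to a subfamily of size at least $|\F|/n^{O(c)}$ we may assume all $F \in \F$ have $|F\cap I_j| = r_j$ for fixed $r_j \sim \rho n/c$. Now suppose, for contradiction, that in the trace on each coordinate $j$ the family $\{F\cap I_j : F\in\F\}$ were $s_j$-avoiding for suitable $s_j \sim \sigma n/c$ with $\sum s_j = s$ — then \Cref{th:FWPR} applied in each $I_j$ (using that each $m_j = r_j - s_j$ is a prime, by our choice of the split) would bound the number of distinct traces in coordinate $j$ by $\binom{n/c}{r_j}(\delta(\rho,\sigma)+o(1))^{n/c}$, and multiplying over $j=1,\dots,c$ gives the total bound $\binom{n}{r}(\delta(\rho,\sigma)+o(1))^{n}$ — hmm, that's $(\ldots)^n$, not $(\ldots)^{n/c}$, so this naive product is too weak and is not what the theorem claims.

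So the actual argument must be the reverse: rather than controlling each coordinate, one shows that if $|\F|$ is too large then some pair $F_1, F_2$ necessarily has $|F_1\cap F_2| = s$, by a pigeonhole/Ramsey-type argument across the $c$ coordinates. Concretely: iteratively apply the Frankl--Wilson bound. In coordinate $1$, if $|\F| > \binom{n}{r}(\delta+o(1))^{n/c}$ then the projection to $I_1$ cannot be $m_1$-separated for the relevant intersection value, so there exist $F_1,F_2\in\F$ with $|F_1\cap F_2\cap I_1|$ equal to a prescribed value; restrict to the large subfamily that is "dangerous" in coordinate $1$, and repeat in coordinate $2$ with the residual slack, and so on. After $c$ rounds the accumulated intersection over $I_1\cup\dots\cup I_c = [n]$ reaches exactly $s = \sum s_j$, producing a forbidden $s$-intersecting pair — contradiction. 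This is the standard "cutting corners" scheme of the paper: one spends a factor $\delta^{n/c}$ per coordinate and there are $c$ coordinates to traverse before a violation is forced, but the violations compound additively in the intersection sizes while the losses compound multiplicatively, giving the single exponent $n/c$ in the bound.

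\begin{proof}[Proof sketch]
Write $m = r-s$ and choose primes $m_1,\dots,m_c$ with $m = m_1+\dots+m_c$ and $m_j \sim (\rho-\sigma)n/c$; this is possible for all large $n$ with $c=3$ when $m$ is odd and $c=4$ in general. Partition $[n]=I_1\sqcup\dots\sqcup I_c$ with $|I_j|\sim n/c$, and set $r_j\sim\rho n/c$, $s_j = r_j-m_j \sim \sigma n/c$ with $\sum r_j = r$, $\sum s_j = s$. Given an $s$-avoiding $\F$, pass to a subfamily (losing only a factor polynomial in $n$) on which every member $F$ has $|F\cap I_j| = r_j$ for all $j$. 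Apply the tool of \Cref{sec2:tool}: process the coordinates $j=1,\dots,c$ in turn, at each step invoking \Cref{th:FWPR} for the $m_j$-prime instance on $I_j$ to peel off a sub-subfamily in which a pair has intersection exactly $s_j$ with the previous coordinates; since $\delta(\rho/c,\sigma/c)=\delta(\rho,\sigma)$, each step costs a factor $\big(\delta(\rho,\sigma)+o(1)\big)^{n/c}$ off of the relevant binomial coefficient $\binom{|I_j|}{r_j}$. If $|\F| > \binom{n}{r}\big(\delta(\rho,\sigma)+o(1)\big)^{n/c}$ then the process survives all $c$ coordinates and yields $F_1,F_2\in\F$ with $|F_1\cap F_2| = \sum_j s_j = s$, contradicting the $s$-avoidance of $\F$. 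Hence $|\F|\le \binom{n}{r}\big(\delta(\rho,\sigma)+o(1)\big)^{n/c}$, as claimed.
\end{proof}

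The main obstacle I anticipate is making the "process the coordinates in turn" step rigorous: one needs the right quantitative form of the tool from \Cref{sec2:tool} so that the intersection values accumulate exactly to $s$ while the multiplicative losses stay at $\delta^{n/c}$ per coordinate rather than $\delta^{n}$ total, and one must handle the rounding of $r_j, s_j$ and the requirement that each $m_j$ be genuinely prime (not merely prime-power) uniformly in $n$ — the latter forces the appeal to ternary Goldbach rather than to the classical fact that every integer is a sum of four primes-or-prime-powers used by Keevash and Long.
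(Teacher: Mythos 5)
Your high-level roadmap matches the paper's: split $m=r-s$ into $c$ nearly equal primes, partition $[n]$ into $c$ nearly equal blocks and (via a pigeonhole of the type in \Cref{lem:part}) pass to a subfamily with fixed traces $r_j=|F\cap I_j|$, then invoke the tool from \Cref{sec2:tool}. You also correctly see why the naive product bound gives only $\delta^n$. But there are two genuine gaps in the middle of the argument.

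First, the account of the key combinatorial step is off. You describe an iterative ``process the coordinates in turn, peel off a sub-subfamily'' argument. That is not how the tool works, and as stated it does not obviously succeed: peeling sequentially does not guarantee that a \emph{single} pair $F_1,F_2$ has $|F_1\cap F_2\cap I_j|=s_j$ simultaneously for every $j$. The paper's mechanism is \Cref{lem:star} applied with $T$ a \emph{path}: view $\F'$ as a subset of $V_1\times\cdots\times V_c$ with $V_j=\binom{I_j}{r_j}$ and $G_j$ the ``intersection-exactly-$s_j$'' graph; if $\F'$ contained an orthogonal path then its two \emph{endpoints} $\U_0=(w_1,\dots,w_c)$, $\U_c=(u_1,\dots,u_c)$ correspond to sets $F_0,F_c\in\F'$ with $|F_0\cap F_c|=\sum_j|w_j\cap u_j|=\sum_j s_j=s$, contradicting $s$-avoidance. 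Hence no orthogonal path exists, and \Cref{lem:star} gives
\[
\frac{|\F'|}{\prod_j |V_j|}\;\le\;\sum_{j=1}^{c}\frac{\alpha(G_j)}{|V_j|}\;\le\;\sum_{j=1}^{c}\bigl(\delta(\rho,\sigma)+o(1)\bigr)^{n_j}\;=\;\bigl(\delta(\rho,\sigma)+o(1)\bigr)^{n/c}.
\]
The whole point is that the independence \emph{ratios add}, so you pay only one factor of $\delta^{n/c}$, not $c$ of them; your sketch never isolates this.

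Second, your aside ``$\delta(\rho/c,\sigma/c)=\delta(\rho,\sigma)$ since $\delta$ is homogeneous of degree zero'' is false: $\delta$ is built from the entropy $H$, which is not degree-zero homogeneous, and $\delta(\rho/c,\sigma/c)\neq\delta(\rho,\sigma)$ in general. Fortunately you do not actually need that identity. The correct observation is that inside each block $I_j$ of size $n_j\sim n/c$ you have $r_j\sim\rho n_j$ and $s_j\sim\sigma n_j$, so the density parameters are still $(\rho,\sigma)$ — not $(\rho/c,\sigma/c)$ — and \Cref{th:FWPR} applies with exponent $n_j\sim n/c$. A smaller point: Helfgott's ternary Goldbach alone gives a representation as three primes, not ``almost equal'' ones; for the balance $m_j\sim m/c$ one needs a quantitative version (the paper cites Matom\"aki--Maynard--Shao), and for even $m$ the paper first splits off one prime near $m/4$ via Baker--Harman--Pintz and then applies the three-prime result to the remaining odd number, rather than a four-primes-or-prime-powers statement.
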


It might be possible to replace $c=4$ in the last exponent with a better $c=2$ in case $r-s$ is even, see the discussion in \Cref{sec3:FR}.

Let us briefly numerically compare this bound with prior results in the special `centered' case $r\sim n/2, s\sim n/4$. If $r-s$ is prime or a prime power, then \Cref{th:FWPR} implies that any $s$-avoiding family $\F \subset \binom{[n]}{r}$ satisfies $|\F| \le \big(1.755+o(1)\big)^n$ and this is tight. If $r-s$ is only assumed to be odd, then following the proof from~\cite{KeevLong2017}, one can only deduce that $|\F| \le \big(1.998+o(1)\big)^n$, while Frankl and R\"odl themselves~\cite{FranklRodl1987} showed that $|\F| \le \big(1.99+o(1)\big)^n$ in this case, and our \Cref{cor:FR} yields that $|\F| \le \big(1.915+o(1)\big)^n$. Similar quantitative improvements hold for other values of $\rho,\sigma$ as well.

One may consider \Cref{cor:FR} as a tool to guarantee an `edge', i.e. a pair of sets with a prescribed cardinality of their intersection, in every sufficiently large uniform family. Under some additions constraints on the parameters, our technique can even guarantee a `clique'.

\begin{Theorem} \label{cor:clique}
	Given positive $\sigma<\rho\le 1/2$, assume that $r=r(n)\sim \rho n$, $s=s(n)\sim\sigma n$ as $n \to \infty$. Put $c=1$ if $r-s$ is prime or a prime power, $c=3$ if $r-s$ is another odd number, and $c=4$ if $r-s$ is even. Then for all $k \ge 3$, any family $\F \subset \binom{[kn]}{kr}$ that contains no collection of $k$ sets with pairwise intersections of cardinality $2s+(k-2)r$ satisfies 
	\begin{equation*}
		|\F| \le \binom{kn}{kr}\big(\delta(\rho,\sigma)+o(1)\big)^{n/c},
	\end{equation*}
	where $\delta(\rho,\sigma)$ is from the statements of \Cref{th:FWPR}.
\end{Theorem}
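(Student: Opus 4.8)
The plan is to prove the contrapositive: every large family $\F\subset\binom{[kn]}{kr}$ contains a collection of $k$ sets with pairwise intersections of size $2s+(k-2)r$ of a very special ``product'' shape, and locating such a collection requires only the one-dimensional result \Cref{cor:FR} (or \Cref{th:FWPR} when $r-s$ is a prime power) together with a pigeonhole argument as in \Cref{sec2:tool}. To set up the construction, fix a partition $[kn]=V_1\sqcup\dots\sqcup V_k$ into $k$ parts of size $n$, identify each $V_i$ with $[n]$, and call a tuple $\bar C=(C_1,\dots,C_k)$ with $C_i\in\binom{V_i}{r}$ a \emph{template}. Given a template together with, for each $i$, a set $D_i\in\binom{V_i}{r}$ with $|C_i\cap D_i|=s$, put $F_i:=D_i\cup\bigcup_{j\ne i}C_j$. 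Each $F_i$ has size $kr$, the sets $F_1,\dots,F_k$ are pairwise distinct (they already differ inside $V_i$, where $|C_i\cap D_i|=s<r$), and since $F_i\cap V_\ell=C_\ell=F_j\cap V_\ell$ for $\ell\notin\{i,j\}$ a direct count gives $|F_i\cap F_j|=|C_i\cap D_i|+|C_j\cap D_j|+(k-2)r=2s+(k-2)r$ for all $i\ne j$. Thus $\{F_1,\dots,F_k\}$ is exactly a forbidden configuration as soon as every $F_i$ lies in the family.

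Next comes a balancing step. For a uniformly random equipartition $[kn]=V_1\sqcup\dots\sqcup V_k$ and a fixed $F$ of size $kr$, the probability that $|F\cap V_i|=r$ for every $i$ equals $\binom{kr}{r,\dots,r}\binom{k(n-r)}{n-r,\dots,n-r}\big/\binom{kn}{n,\dots,n}$, and by Stirling the dominant exponential factors cancel, so this probability is $1/\mathrm{poly}(n)$. Averaging, we may fix an equipartition for which $\F':=\{F\in\F:\ |F\cap V_i|=r\text{ for all }i\}$ satisfies $|\F'|\ge|\F|/\mathrm{poly}(n)$. The map $F\mapsto(F\cap V_1,\dots,F\cap V_k)$ identifies $\F'$ with a subset $\T\subseteq\binom{V_1}{r}\times\dots\times\binom{V_k}{r}$, and since $\F'\subseteq\F$ it still contains no forbidden configuration, in particular none of the product type above.

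Now the core argument. Suppose $\F'$ contains no product-type configuration. For a template $\bar C$ and index $i$, write $\mathcal G_i^{\bar C}=\{D\in\binom{V_i}{r}:(C_1,\dots,D,\dots,C_k)\in\T\}$ for the $i$-th fibre of $\T$ through $\bar C$ (this depends only on the coordinates other than the $i$-th). Absence of a product-type configuration means: for every template $\bar C$ there is an index $i=i(\bar C)$ such that $\mathcal G_i^{\bar C}$ contains no $D$ with $|C_i\cap D|=s$. Partitioning $\T$ according to $i(\bar C)$ and applying pigeonhole, some coordinate — say the first — satisfies $i(\bar C)=1$ for a subset $\T_1\subseteq\T$ with $|\T_1|\ge|\T|/k$. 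Fix a tail $\vec t=(C_2,\dots,C_k)$ and set $A(\vec t)=\{C_1:(C_1,\vec t)\in\T\}$ and $B(\vec t)=\{C_1:(C_1,\vec t)\in\T_1\}$; note $B(\vec t)\subseteq A(\vec t)$ and $A(\vec t)=\mathcal G_1^{\bar C}$ for any $\bar C$ with this tail. For each $C_1\in B(\vec t)$ no element of $A(\vec t)\supseteq B(\vec t)$ has intersection exactly $s$ with $C_1$, so $B(\vec t)$ is an $s$-avoiding family in $\binom{[n]}{r}$; by \Cref{cor:FR} (respectively \Cref{th:FWPR} with $c=1$ when $r-s$ is a prime power) we get $|B(\vec t)|\le\binom{n}{r}\big(\delta(\rho,\sigma)+o(1)\big)^{n/c}$. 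Summing over the at most $\binom{n}{r}^{k-1}$ tails gives $|\T_1|\le\binom{n}{r}^{k}\big(\delta(\rho,\sigma)+o(1)\big)^{n/c}$, hence $|\F'|=|\T|\le k|\T_1|\le\binom{n}{r}^{k}\big(\delta(\rho,\sigma)+o(1)\big)^{n/c}$. Since $\binom{n}{r}^k=\binom{kn}{kr}e^{o(n)}$ (both equal $e^{knH(\rho)+o(n)}$) and $|\F|\le\mathrm{poly}(n)\cdot|\F'|$, the constant $k$ together with the polynomial and subexponential factors are absorbed into the $o(1)$, giving $|\F|\le\binom{kn}{kr}\big(\delta(\rho,\sigma)+o(1)\big)^{n/c}$.

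I expect the main obstacle to be getting the bookkeeping of the third step exactly right rather than any single estimate: one must spot the right product shape for the clique (so that $|F_i\cap F_j|=2s+(k-2)r$ and the clique is genuinely among the forbidden configurations), and then observe that after fixing the bad coordinate the family $B(\vec t)$ one extracts is $s$-avoiding — which uses precisely that the ``no intersection $s$'' property was imposed relative to the \emph{larger} fibre $A(\vec t)\supseteq B(\vec t)$, not just within $B(\vec t)$ itself. The remaining points — the Stirling cancellation making the balancing probability polynomial, the identity $\binom{n}{r}^k=\binom{kn}{kr}e^{o(n)}$, and the absorption of the constant $k$ — are routine.
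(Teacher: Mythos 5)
Your proof is correct and follows essentially the same route as the paper: partition $[kn]$ into $k$ parts of size $n$, identify the balanced subfamily with a subset of $\binom{V_1}{r}\times\cdots\times\binom{V_k}{r}$, observe that the leaves of an orthogonal star (your ``product-type configuration'') form precisely the forbidden $k$-clique with pairwise intersections $2s+(k-2)r$, and bound each fibre via the one-dimensional Frankl--Wilson/Frankl--R\"odl result. The only differences are presentational: you re-derive the balancing step (\Cref{lem:part}) via a Stirling estimate and re-derive the star case of \Cref{lem:star} with a direct pigeonhole on the bad coordinate $i(\bar C)$ rather than invoking the inductive \Cref{lemgt}, and you note that the center of the star need not lie in the family---a small simplification that yields the same bound.
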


\subsection{Manhattan Ramsey Theory} \label{sec1:MRT}

Let $\R_1^n$ be an $n$-dimensional space equipped with the Manhattan norm\footnote{Recall that the Manhattan $\ell_1$-norm is defined by $\|\x\|_1 \coloneqq \sum_{i=1}^n|x_i|$ for all $\x=(x_1,\dots,x_n) \in \R^n$.}. One can easily translate the range of Euclidean problems considered in \Cref{sec1:ERT} to Manhattan space. Namely, for a subset $\M \subset \R^d$, the {\it chromatic number $\chi(\R_1^n,\M)$} is defined as the smallest $r$ such that there exists an $r$-coloring of $\R^n$ with no monochromatic $\ell_1$-isometric copy\footnote{A subset $\M' \subset \R^n$ is called an {\it $\ell_1$-isometric copy} of $\M$ if there exists a bijection $f: \M \to \M'$ such that $\|\x-\y\|_1 = \|f(\x)-f(\y)\|_1$ for all $\x,\y \in \M$.} of $\M$. For a subset $V \subset \R^n$, the \textit{independence number} $\alpha(V,\M)$ stands for the maximum cardinality $|W|$ of a subset $W\!\subset\! V$ with no $\ell_1$-isometric copy of $\M$. Finally, we call $\M\subset \R^d$ {\it super-Ramsey}, if for a sufficiently small $\varepsilon>0$ and for any dimension $n$, there is a set $V\subset \R^n$ of size that is at most exponential in $n$ such that $|V|/\alpha(V, \M) \ge \big(1+\varepsilon+o(1)\big)^n$. 

In the present paper, we show that the following general statement almost trivially follows from \Cref{th:prod} and the classic result of Frankl and Wilson~\cite{FranklWil1981}, although it has never been stated explicitly to our knowledge.

\begin{Theorem} \label{th:fr_manh}
	For all $d \in \N$, any finite $\M \subset \R^d$ is super-Ramsey in case of the Manhattan norm. In particular, there exists $\varepsilon=\varepsilon(\M)>0$ such that
	\begin{equation*}
		\chi(\R_1^n,\M) \ge \big(1+\varepsilon+o(1)\big)^n.
	\end{equation*}
\end{Theorem}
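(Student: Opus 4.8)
The plan is to reduce the Manhattan case to the Euclidean product theorem (Theorem~\ref{th:prod}) via a single observation: the Manhattan metric on $\R^d$ can be realized, up to $\ell_1$-isometry and after a suitable rescaling, as the restriction of a \emph{Euclidean product metric} on a much higher-dimensional space, where each coordinate of $\R^d$ is blown up into a large ``grid'' of Euclidean points. Concretely, I would first reduce to the case where $\M \subset \R^d$ has all pairwise $\ell_1$-distances rational, hence (after scaling) all pairwise $\ell_1$-distances integral; this is legitimate because scaling $\M$ only scales $V$ and does not change any of the quantities involved, and passing to a set $\M'$ with all coordinates and distances rational that still contains $\M$ as a subset suffices, since a subset of a super-Ramsey set is super-Ramsey. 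Actually the cleanest route is: embed $\M$ into a suitable \emph{integer lattice box} $\{0,1,\dots,L\}^d$ with the $\ell_1$-metric; any finite $\M$ with integral coordinates sits inside such a box, and again it is enough to prove the box is super-Ramsey.

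Next I would identify the box $\big(\{0,1,\dots,L\}^d, \ell_1\big)$ with a Cartesian product. The point is that $\{0,1,\dots,L\}$ with the $\ell_1$-metric (i.e. $|i-j|$) is $\ell_1$-isometric to the path $P_{L+1}$, and the path $P_{L+1}$ embeds isometrically in the $\ell_1$-metric into the Hamming cube: map $i \mapsto (\underbrace{1,\dots,1}_{i},0,\dots,0)\in\{0,1\}^L$, so that $\|f(i)-f(j)\|_1 = |i-j|$. Doing this coordinatewise turns $\big(\{0,\dots,L\}^d,\ell_1\big)$ into a subset of the Hamming cube $\{0,1\}^{Ld}$ with its $\ell_1 = $ Hamming metric, which in turn is the $Ld$-fold $\ell_1$-product of the two-point space $\{0,1\}$. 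Since an $\ell_1$-isometric copy of the box inside $\R_1^n$ contains an $\ell_1$-isometric copy of any $\M$ inside the box, it remains to show the two-point set (a single Hamming edge, viewed in $\R_1$) is super-Ramsey and that the $\ell_1$-product of super-Ramsey sets is super-Ramsey. For the latter I would invoke Theorem~\ref{th:prod}: although it is stated for the Euclidean product, its proof only uses the metric structure and the product/pigeonhole mechanics, and the Hamming/$\ell_1$ product of Hamming cubes \emph{is} itself isometric to a Euclidean configuration (the vertices of a scaled cube), so one can pass through the Euclidean world — a two-point $\ell_1$-space of distance $t$ is trivially a two-point Euclidean space of distance $t$, and the $\ell_1$-product of finitely many two-point spaces is exactly the vertex set of a (rescaled, possibly non-cubical) box, whose Euclidean realization is a non-degenerate ``rectangular parallelepiped'' — a Cartesian product of two-point Euclidean sets. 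Frankl and Wilson's theorem gives that each two-point Euclidean factor is super-Ramsey, and Theorem~\ref{th:prod} applied $Ld-1$ times gives that the product is super-Ramsey \emph{in the Euclidean sense}. Finally I would note that a set $V\subset\R^N$ witnessing the Euclidean super-Ramsey property of this box automatically witnesses the $\ell_1$ super-Ramsey property of the box \emph{after re-interpreting $V$ in $\R_1^N$}: this works because the box configuration consists of $0/1$ vectors, on which the Euclidean distance squared equals the $\ell_1$ distance (Hamming distance), so a monochromatic $\ell_1$-copy and a monochromatic Euclidean copy are literally the same event on such point sets — more carefully, one picks $V$ itself to be a union of translates of Hamming-cube-like sets and checks the independence number does not change under the metric switch. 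The displayed chromatic-number bound then follows from inequality~\eqref{eq:chi_alpha2}.

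The step I expect to be the genuine obstacle is the last one: matching the \emph{Euclidean} independence number $\alpha(V,\mathcal N)$ with the \emph{$\ell_1$} independence number when $\mathcal N$ is the box. Isometric copies are governed by the ambient metric, and while a Euclidean isometry of a $0/1$-configuration is also an $\ell_1$-isometry of it (both reduce to permutation-of-coordinates plus translation on such a rigid configuration, since the squared Euclidean distance matrix determines the Hamming distance matrix), the converse — that an $\ell_1$-copy of the box inside an arbitrary $V\subset\R^n$ arises from a Euclidean copy — is false in general, so one cannot take $V$ arbitrary. The fix is to be careful about \emph{which} $V$ one uses: one should take the same $V$ produced by the Euclidean argument for the box (essentially a high-dimensional Hamming-type set), and argue directly that within that specific $V$, every $\ell_1$-copy of the box is in fact a permuted-coordinate copy, hence monochromatic-free implies the same density bound. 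Handling the reduction from general finite $\M$ with real (not rational) pairwise distances is a minor technical point: one perturbs to rational coordinates, but must ensure no \emph{new} coincidences among distances are created in a way that would force a copy of a different set — here it is harmless because we only need $\M$ to \emph{embed} into the rational box, and an $\ell_1$-copy of the box still contains an $\ell_1$-copy of (a set isometric to) $\M$ provided the rational approximation was chosen with $\M$ exactly as a subset, which is possible precisely when $\M$'s coordinates are rational; for irrational $\M$ one instead argues by a limiting/compactness argument or simply notes $\varepsilon(\M)$ may be taken to depend on a rational $\M'\supset$(isometric copy of)$\M$, and such $\M'$ exists because we may first replace $\R^d$ by a generic rational rotation — so the cleanest statement is to prove it for $\M$ with rational pairwise distances and remark that the general case follows since every finite metric space embeds arbitrarily close, and isometric-copy containment is an open condition one does not actually need to be exact about, as a sufficiently fine rational box still contains every finite $\M$ up to the negligible slack absorbed in the $o(1)$.
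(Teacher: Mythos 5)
Your overall plan --- reduce $\M$ to a product of two-point sets and invoke \Cref{th:prod} together with Frankl--Wilson --- is the right one and is essentially the approach of the paper. Your unary encoding of $\{0,\dots,L\}$ into $\{0,1\}^L$ is exactly the paper's embedding of a \emph{baton} into a box, just specialized to unit side lengths. However, two genuine gaps remain, both of which you yourself flag but do not resolve, and both of which the paper's (quite short) argument simply avoids.

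First, your reduction to integral coordinates is a real gap, not a harmless technicality. If $\M$ has an irrational pairwise $\ell_1$-distance (e.g.\ $\M=\{0,\sqrt 2\}$), then no $\ell_1$-isometric copy of $\M$ sits inside any rational-coordinate set, so there is no rational $\M'$ containing an isometric copy of $\M$, and no amount of rescaling or ``generic rational rotation'' fixes this. Your remark that ``isometric-copy containment is an open condition'' and can be ``absorbed in the $o(1)$'' is incorrect: $\alpha(V,\M)$ depends on exact distance coincidences and is not continuous in $\M$, so an approximating box of nearby rational distances controls $\alpha(V,\M')$ for $\M'\neq\M$ and tells you nothing about $\alpha(V,\M)$. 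The paper sidesteps this entirely: instead of the uniform box $\{0,\dots,L\}^d$, it uses the ``grid'' $\prod_{i=1}^d\{x^0_i,\dots,x^k_i\}$ built directly from the coordinates of $\M=\{\x^0,\dots,\x^k\}$, which always contains $\M$. Each coordinate factor $\{x^0_i,\dots,x^k_i\}$ is a baton $\B(\lambda_1,\dots,\lambda_k)$ with \emph{arbitrary real} increments, and each such baton $\ell_1$-embeds into a box with side lengths $\lambda_1,\dots,\lambda_k$; since Frankl--Wilson makes a two-point set of \emph{any} prescribed distance super-Ramsey (just rescale $V$), boxes with real side lengths are super-Ramsey, and the rationality restriction never arises.

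Second, the detour through the Euclidean version of \Cref{th:prod} and then back to $\ell_1$ introduces a transfer problem between Euclidean and $\ell_1$ independence numbers that you correctly identify as the ``genuine obstacle'' but leave as a vague ``pick the right $V$ and check rigidity.'' This is unnecessary work. The paper simply observes that the proof of \Cref{th:prod} given in \Cref{sec2:FR} never uses anything specific to the Euclidean norm --- it is a pure counting/pigeonhole argument over product sets --- and therefore holds verbatim for $\ell_1$ (indeed for all $\ell_p$). Likewise, the Frankl--Wilson construction for a two-point set works for $\ell_p$, with \Cref{th:rai2} giving the best known quantitative bound for $\ell_1$. Invoking these two metric-agnostic facts directly gives: two-point sets super-Ramsey in $\ell_1$ $\Rightarrow$ boxes super-Ramsey in $\ell_1$ $\Rightarrow$ batons super-Ramsey in $\ell_1$ $\Rightarrow$ grids (products of batons) super-Ramsey in $\ell_1$ $\Rightarrow$ every finite $\M$ super-Ramsey in $\ell_1$. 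No Euclidean-to-$\ell_1$ transfer is needed, and no rigidity claim about specific $V$'s is required.
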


We also use our tree-like structures to provide relatively strong explicit exponential lower bounds on these chromatic numbers in some important special cases, see the exact statements and their proofs in \Cref{sec3:MRT}.

\section{Tools} \label{sec2:prel}

\subsection{Tree-like concatenation} \label{sec2:tool}

Our main new trick to combine the constructions consists of two elements. First is a simple graph-theoretic lemma. Recall that, for a graph $G = (V,E)$, its independence number $\alpha(G)$ is the size of the largest set of vertices that induces no edges.

\begin{Lemma} \label{lemgt}
	Assume that $G_i = (V, E_i)$, $i\in [k]$ are graphs on the same vertex set. Fix a tree $T$ on $k$ arbitrarily ordered edges. Then in any subset $W\subset V$ of size strictly bigger than $\alpha(G_1)+\ldots+\alpha(G_k)$ there exists a homomorphic copy $h(T)$ of $T$ (that is, not necessarily with all vertices distinct) in $G = (V, E_1\cup\ldots\cup E_k)$ such that the image of the $i$-th edge of $T$ in $h(T)$ belongs to $E_i$.
\end{Lemma}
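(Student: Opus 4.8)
The plan is to induct on $k$, the number of edges of $T$. For the base case $k=1$, $T$ is a single edge, and the claim reduces to the following: if $W \subset V$ has $|W| > \alpha(G_1)$, then $W$ contains an edge of $G_1$ — which is immediate from the definition of $\alpha(G_1)$, since otherwise $W$ would be an independent set in $G_1$ larger than $\alpha(G_1)$. (The ``homomorphic copy'' of a single edge is just an edge, with its two endpoints distinct.)

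For the inductive step, suppose the statement holds for all trees with fewer than $k$ edges. Let $T$ have $k$ edges, with the edges ordered $e_1,\dots,e_k$; I want to peel off one leaf edge. The cleanest choice is to look at the last edge $e_k = \{a,b\}$ in the ordering and arrange (or simply observe we may choose the leaf appropriately — see the obstacle paragraph) that $e_k$ is a pendant edge of $T$, say with $b$ a leaf. Let $T' = T - b$ be the tree on the remaining $k-1$ edges $e_1,\dots,e_{k-1}$, and let $G' = (V, E_1 \cup \dots \cup E_{k-1})$. I would like to first find $h(T')$ inside a suitable subset of $W$ using the inductive hypothesis, and then extend it across $e_k$ using the vertex $a$ together with $G_k$.

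The key device making this work is a \emph{counting/pigeonhole split of $W$}. Since $|W| > \alpha(G_1) + \dots + \alpha(G_k)$, I partition the ``budget'': either $W$ contains a subset $W'$ of size $> \alpha(G_1) + \dots + \alpha(G_{k-1})$ that is, in addition, suitable for the induction, or else $W$ is ``mostly'' an independent set in $G_k$. More precisely, the plan is: consider whether $W$ itself has more than $\alpha(G_1)+\dots+\alpha(G_{k-1})$ vertices available after removing a maximum independent set of $G_k$; if $|W| - \alpha(G_k) > \alpha(G_1)+\dots+\alpha(G_{k-1})$ — which is exactly our hypothesis — then in particular $W$ (of size $> \alpha(G_1)+\dots+\alpha(G_{k-1})$) contains by induction a homomorphic copy $h(T')$ with the $i$-th edge landing in $E_i$. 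Now let $a' = h(a)$ be the image of the attachment vertex $a$. I need a vertex $c \in W$ with $\{a',c\} \in E_k$; then setting $h(b) = c$ completes the homomorphic copy $h(T)$ with all edge-colour constraints satisfied. Such a $c$ exists \emph{unless} $a'$ has no $G_k$-neighbour in $W$. To force existence, I re-run the induction not on all of $W$ but on $W \setminus I_k$ where $I_k$ is a maximum independent set of $G_k$ chosen to contain... — this is where care is needed. The actual argument should go: among all vertices of $W$, at most $\alpha(G_k)$ of them can be ``$G_k$-isolated inside $W$'' only if those vertices form an independent set, which is not automatic. So instead I should choose $I_k \subseteq W$ to be an independent set of $G_k$ of maximum size \emph{within} $W$ (size $\le \alpha(G_k)$), apply induction to $W \setminus I_k$ (which has size $> \alpha(G_1)+\dots+\alpha(G_{k-1})$) to get $h(T')$, and then observe that $a' = h(a) \in W \setminus I_k$ together with maximality of $I_k$ forces $a'$ to have a $G_k$-neighbour somewhere in $W$ — but I need that neighbour to lie in $W$, which maximality of $I_k$ gives: if $a'$ had no $G_k$-neighbour in $W$ at all, then $I_k \cup \{a'\}$ would be a larger independent set of $G_k$ in $W$, a contradiction. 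Take $c$ to be that neighbour and set $h(b) = c$.

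The main obstacle I anticipate is bookkeeping around the edge ordering and leaf selection: the lemma fixes an \emph{arbitrary} ordering of the edges of $T$, but the peeling argument wants to remove a \emph{leaf} edge, which need not be the last in the given order. Two ways around this: either (i) note that the statement is symmetric enough that we may reorder — we process edges in an order that is a valid ``leaf-stripping'' order of $T$, and since the constraint is just ``image of $e_i$ lies in $E_i$'' for each $i$ individually, the order in which we verify these constraints is irrelevant; or (ii) strengthen the induction hypothesis to: for every subtree obtained by deleting leaves and every assignment of the graphs $G_i$ to its edges, the conclusion holds. Approach (i) is cleaner and is what I would write up. A secondary point to handle carefully is that $h(T)$ is only a \emph{homomorphic} copy, so vertices may coincide (e.g. $h(b) = c$ could already be the image of some other vertex of $T'$); this is explicitly allowed by the statement, so no extra work is needed, but I would remark on it to reassure the reader. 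The whole proof is short — a page at most — once the leaf-stripping order and the ``maximal independent set within $W$'' trick are set up correctly.
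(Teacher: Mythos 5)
Your proof is correct, but it takes a genuinely different inductive route from the paper's. You peel off a single leaf edge: after reordering so that $e_k$ is pendant with attachment vertex $a$ and leaf $b$, you remove from $W$ a maximum independent set $I_k$ of $G_k$ \emph{restricted to} $W$, apply the inductive hypothesis to $W\setminus I_k$ with the tree $T-b$, and then use maximality of $I_k$ to argue that the image $a'=h(a)\in W\setminus I_k$ must have a $G_k$-neighbour $c\in W$, which becomes $h(b)$. This is sound: $|W\setminus I_k|\ge |W|-\alpha(G_k)>\sum_{i<k}\alpha(G_i)$, and if $a'$ had no $G_k$-neighbour in $W$ then $I_k\cup\{a'\}$ would be a larger $G_k$-independent subset of $W$. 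The paper instead picks an \emph{internal} vertex $v$ of $T$, splits $T$ into two subtrees $T'$ and $T''$ meeting only at $v$, and bounds the number of ``bad'' vertices $w\in W$ that cannot serve as $h(v)$ for a copy of $T'$ (resp.\ $T''$): such bad vertices form a set with no homomorphic copy of $T'$ rooted at any of its own members, so the inductive hypothesis bounds its size by $\sum_{i\in U}\alpha(G_i)$, and similarly for $T''$; any vertex of $W$ that is bad for neither subtree is a gluing point. The two decompositions are different in spirit -- yours is ``grow the tree one leaf at a time,'' theirs is ``split at a branching vertex and glue.'' The paper's version is slightly more economical in that it never needs the auxiliary ``maximum independent set inside $W$'' device; your version is arguably more constructive and mirrors more directly the later applications to orthogonal paths and stars (Lemma~\ref{lem:star}), where one really does extend a structure one coordinate at a time. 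Your remark about reordering the edges so the peeled one comes last is a genuine point that needs to be addressed, and your option~(i) handles it correctly: the inductive statement is insensitive to how the $k-1$ surviving edges are matched with their graphs.
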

\begin{proof}
    
The proof of this statement is a simple induction on $k$. For $k=1$ the statement is obvious because the only tree on $2$ vertices is an edge, and the collection of vertices that contains no homomorphic copies of an edge must form an independent set.
%violate the aforementioned property must form an independent set.

To do the induction step from $k-1$ to $k$, take any tree $T$ with $k$ edges and a non-leaf vertex $v\in T$. Then, split the tree $T$ into two non-empty subtrees $T',T''$ arbitrarily such that $v$ is their only common vertex. Clearly, both $T'$ and $T''$ have at most $k-1$ edges. Let $U\subset [k]$ be the set of indices of edges in $T'$. (Then $[k]\!\setminus\! U$ is the set of indices of edges in $T''$.) 

By induction, there are at most $\sum_{i\in U} \alpha(G_i)$ vertices $w\in W$ such that no homomorphic copy of $T'$ in $W$ satisfies $h(v) = w$. Indeed, the set of all such $w\in W$ contains no copies of $T'$ since no vertex can play the role of $v$ in the image of $T'$. Similarly, there are at most $\sum_{i\in [k]\setminus U} \alpha(G_i)$ vertices $w\in W$ such that no homomorphic copy of $T''$ in $W$ satisfies $h(v) = w$.

If $|W|>\sum_{i\in[k]}\alpha(G_i)$, then there is a vertex $w\in W$ not falling in either of these two categories, and we get the desired copy of $T$ with $h(v) = w$.
\end{proof}

The other important ingredient for us is orthogonality. It allows to turn seemingly `non-rigid' structures like paths or stars into `rigid' cliques. In graph-theoretic terms, we need to work with products of graphs. Below, we introduce it in the form that would be convenient for applications.

Let $G_i=(V_i,E_i)$, $i \in [k]$, be a family of $k$ graphs. For any choice or their edges $(u_i,w_i) \in E_i,$ $i \in [k]$, we call the sequence
\begin{equation*}
	\W_0 \coloneqq (w_1,\dots,w_k), \ \W_1 \coloneqq(u_1,w_2,\dots,w_k), \ \dots \ , \  \W_k \coloneqq(w_1,\dots,w_{k-1},u_k).
\end{equation*}
an {\it orthogonal star} and the sequence
\begin{equation*}
	\U_0 \coloneqq (w_1,\dots,w_k), \ \U_1 \coloneqq (u_1,w_2,\dots,w_k), \ \U_2 \coloneqq (u_1,u_2,w_3,\dots,w_k), \ \dots \ , \ \U_k \coloneqq (u_1,\dots,u_k)
\end{equation*}
an {\it orthogonal path} in the Cartesian product $\VV$.

%The following lemma provides a simple upper bound on the maximum cardinality of a subset $W \subset \VV$ that does not contain either orthogonal stars or paths.

\begin{Lemma} \label{lem:star}
	In the above notation, if $W \subset \VV$ contains no orthogonal stars (or no orthogonal paths), then
	\begin{equation*}
		\frac{|W|}{|\VV|} \leq \sum_{i=1}^{k} \frac{\alpha(G_i)}{|V_i|}.
	\end{equation*}
\end{Lemma}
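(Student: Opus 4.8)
The plan is to treat the two versions of the statement in parallel via the following scheme. For a fixed $i\in[k]$ and a fixed tuple $(c_j)_{j\ne i}$, call the set $\{(c_1,\dots,c_{i-1},x,c_{i+1},\dots,c_k): x\in V_i\}$ an \emph{$i$-line}; for each $i$ these lines partition $\VV$ into $\prod_{j\ne i}|V_j|$ parts, and identifying such a line $\ell$ with $V_i$ we write $\ell(x)$ for its point with $i$-th coordinate $x$. The goal is to produce a partition $W=W_1\sqcup\dots\sqcup W_k$ — assigning to each $\x\in W$ a \emph{responsible coordinate} — with the property that for every $i$ and every $i$-line $\ell$ the set $\{x\in V_i: \ell(x)\in W_i\}$ is independent in $G_i$. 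Granting this, $|W_i\cap\ell|\le\alpha(G_i)$ for each $i$-line $\ell$, hence $|W_i|\le\alpha(G_i)\prod_{j\ne i}|V_j|$; summing over $i$ and dividing by $|\VV|=\prod_j|V_j|$ gives exactly $\frac{|W|}{|\VV|}\le\sum_{i=1}^k\frac{\alpha(G_i)}{|V_i|}$. So everything reduces to defining the responsible coordinate, and the two cases differ only in this definition.

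For the orthogonal star version, say $\x=(w_1,\dots,w_k)\in W$ is \emph{blocked in direction $i$} if no $u_i$ with $(u_i,w_i)\in E_i$ has $(w_1,\dots,u_i,\dots,w_k)\in W$. A point blocked in no direction would be the center $\W_0$ of an orthogonal star contained in $W$; since $W$ has none, every $\x\in W$ is blocked in some direction, and I let its responsible coordinate be the smallest such $i$. If $x,x'$ lie on a common $i$-line $\ell$ with $\ell(x),\ell(x')\in W_i$ and $(x,x')\in E_i$, then $\ell(x)$ being blocked in direction $i$ forces $\ell(x')\notin W$, contradicting $\ell(x')\in W_i$; hence the trace of $W_i$ on $\ell$ is $G_i$-independent, as needed.

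The path version is where the real work lies, and I expect the choice of responsible coordinate to be the main obstacle. The naive idea — greedily extend a path forward from $\x$ (as $\U_0$), coordinate by coordinate, until it gets stuck — does yield a ``stuck'' coordinate $i$, but the object it points to is a vertex $\mathbf{w}^{(i-1)}$ on an $i$-line that is generally \emph{not} $\x$, and the map $\x\mapsto\mathbf{w}^{(i-1)}$ has uncontrollably large fibers, which destroys the line count. The fix is to make $\x$ itself the \emph{last} vertex of the relevant path: for $\x\in W$ let $f(\x)$ be the largest $b\in\{0,1,\dots,k\}$ such that $\x=\U_b$ for some orthogonal path $\U_0,\dots,\U_b$ contained in $W$ in which the step $\U_{l-1}\to\U_l$ changes coordinate $l$ along an edge of $G_l$ for every $l\le b$. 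The choice $b=0$ is always available, so $f$ is well defined; and $f(\x)=k$ is impossible since it would exhibit a full orthogonal path in $W$ (here one uses that the $G_l$ are loopless, so these paths are non-degenerate). I let the responsible coordinate of $\x$ be $f(\x)+1\in[k]$, so $W_i=\{\x: f(\x)=i-1\}$. If now $x,x'$ lie on a common $i$-line $\ell$ with $\ell(x),\ell(x')\in W_i$ and $(x,x')\in E_i$, then, since $f(\ell(x'))=i-1$, there is an orthogonal path $\U_0,\dots,\U_{i-1}=\ell(x')$ of the above type in $W$; appending $\U_i:=\ell(x)$, which differs from $\U_{i-1}$ only in coordinate $i$ along the edge $(x',x)\in E_i$ and lies in $W$, gives such a path of length $i$ ending at $\ell(x)$, so $f(\ell(x))\ge i$, contradicting $\ell(x)\in W_i$. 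Thus the trace of $W_i$ on every $i$-line is $G_i$-independent, and the reduction of the first paragraph completes the proof. The only remaining points — that $i$-lines partition $\VV$ and number $\prod_{j\ne i}|V_j|$, and that the trace condition literally gives $|W_i\cap\ell|\le\alpha(G_i)$ — are routine.
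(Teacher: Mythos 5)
Your proof is correct, but it takes a different route from the paper. The paper derives \Cref{lem:star} as an immediate corollary of the more general graph-theoretic \Cref{lemgt}: it defines auxiliary graphs $G_i'$ on $\VV$ (edges changing only the $i$-th coordinate), observes $\alpha(G_i')/|\VV| = \alpha(G_i)/|V_i|$, and then applies \Cref{lemgt} with $T$ a star or a path; \Cref{lemgt} itself is proved by a clean induction on the number of edges of $T$, splitting the tree at a non-leaf vertex. Your argument instead works directly with the product set: you partition $W$ into classes $W_1,\dots,W_k$ by assigning each point a ``responsible coordinate'' (first blocked direction for the star case; one plus the length of the longest forward-extendable orthogonal sub-path terminating at $\x$ for the path case) and then show that on each $i$-line the trace of $W_i$ is $G_i$-independent, which gives the bound by a line-by-line count. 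This is more self-contained and elementary — it avoids the graph-product reformulation and the inductive tree lemma entirely — and your remark that one must take $\x$ to be the \emph{last} vertex of the path rather than the first, to control the fibers of the assignment, is exactly the right subtlety. The trade-off is that your argument is bespoke to stars and paths, whereas the paper's \Cref{lemgt} covers arbitrary trees uniformly (though stars and paths are all that is needed downstream). Both are valid; the paper's factoring is tidier if one wishes to keep the tree-homomorphism viewpoint available for other trees, while yours is more direct for the two cases actually used.
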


\begin{proof}
For all $i \in [k]$, consider a graph $G_i':=(\VV,E_i')$, where an edge connects two $k$-tuples $(u_1,\dots,u_k)$ and $(w_1,\dots,w_k)$ if and only if $(u_i,w_i) \in E_i$ and $u_j=w_j$ for all $j \neq i$.

%Put $G_i':=(V_1\times\ldots\times V_k,E_i')$, where 
%$$E_i'= \{\{\W,\U\}: w_j=u_j \text{ for } j\ne i \text{ and } \{w_i,u_i\}\in E_i\}.$$
The first thing to note is that $\frac{\alpha(G_i)}{|V_i|}= \frac{\alpha(G_i')}{|\VV|}$. Then $G'_i$ all live on the same set of vertices, and we only need to apply \Cref{lemgt} with $T$ being a star (or a path). It is straightforward to see that any homomorphic copy of such $T$ must be an orthogonal star (or an orthogonal path).
\end{proof}

\vspace{1mm}

Finally, we conclude with a straightforward technical proposition that we will need.
\begin{Proposition}\label{lem:part}
	Given $r<n$, let $\F \subset \binom{[n]}{r}$ be a family of $r$-element subsets of $[n]$. Then for all partitions of $n$ and $r$ into $k$ non-negative summands $n=n_1+\dots+n_k$ and $r=r_1+\dots+r_k$, there is a partition of $[n]$ into $k$ disjoint subsets $[n]=N_1\cup\dots\cup N_k$ satisfying the following two properties. First, $|N_i|=n_i$ for all $i \in [k]$. Second, the size of the subfamily
	\begin{equation*}
		\F' \coloneqq \big\{F \in \F: |F\cap N_i| = r_i \mbox{ for all } i \in [k]\big\}
	\end{equation*}
	satisfies
	\begin{equation*}
		|\F'|\cdot \binom{n}{r} \ge |\F|\cdot\binom{n_1}{r_1}\cdots\binom{n_k}{r_k}.
	\end{equation*}
\end{Proposition}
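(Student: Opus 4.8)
The plan is to use a straightforward averaging (first-moment) argument over a uniformly random ordered partition of $[n]$ with the prescribed block sizes. First I would fix the target block sizes $n_1,\dots,n_k$ and let $\pi$ be a uniformly random partition $[n]=N_1\cup\dots\cup N_k$ with $|N_i|=n_i$ for every $i$; there are $\binom{n}{n_1,\dots,n_k}$ such partitions. For a fixed set $F\in\F$, I would compute the probability that $|F\cap N_i|=r_i$ for all $i$ simultaneously. Since $\pi$ distributes the $r$ elements of $F$ and the $n-r$ elements of $[n]\setminus F$ independently in the appropriate sense, this probability is exactly the hypergeometric-type expression
\[
\Pr\big[\,|F\cap N_i|=r_i\ \forall i\,\big]=\frac{\binom{r}{r_1,\dots,r_k}\binom{n-r}{n_1-r_1,\dots,n_k-r_k}}{\binom{n}{n_1,\dots,n_k}},
\]
which, after expanding multinomials into products of binomials and cancelling the $r!$ and $(n-r)!$ factors, simplifies to $\binom{n_1}{r_1}\cdots\binom{n_k}{r_k}\big/\binom{n}{r}$.

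Next I would apply linearity of expectation: the expected size of $\F'=\F'(\pi)$ equals $|\F|$ times this probability, i.e.
\[
\mathbb{E}_\pi\big[\,|\F'(\pi)|\,\big]=|\F|\cdot\frac{\binom{n_1}{r_1}\cdots\binom{n_k}{r_k}}{\binom{n}{r}}.
\]
Therefore there exists at least one partition $\pi$ for which $|\F'(\pi)|$ is at least its expectation, and multiplying through by $\binom{n}{r}$ gives exactly the claimed inequality $|\F'|\cdot\binom{n}{r}\ge |\F|\cdot\binom{n_1}{r_1}\cdots\binom{n_k}{r_k}$. That partition has the two required properties by construction.

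The only mild subtlety — not really an obstacle — is checking that the per-set probability is genuinely independent of which particular $r$-set $F$ we picked (it is, by symmetry of the uniform random partition under permutations of $[n]$), and handling degenerate cases where some $r_i>n_i$, in which case the relevant binomial coefficient is $0$ and the inequality holds trivially with $\F'=\varnothing$. I would also note that the identity
\[
\frac{\binom{r}{r_1,\dots,r_k}\binom{n-r}{n_1-r_1,\dots,n_k-r_k}}{\binom{n}{n_1,\dots,n_k}}=\frac{\binom{n_1}{r_1}\cdots\binom{n_k}{r_k}}{\binom{n}{r}}
\]
is a one-line algebraic manipulation, so no real computation is needed. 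Hence the whole proof is a single averaging step, and no step presents a genuine difficulty.
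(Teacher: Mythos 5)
Your proposal is correct and is essentially the same argument as the paper's: the paper counts, over all $\binom{n}{n_1,\dots,n_k}$ ordered partitions of prescribed block sizes, the number of partitions that each fixed $F\in\F$ contributes to, and then applies the pigeonhole principle, which is just a non-probabilistic rewording of your averaging over a uniformly random partition. The counting factor $\frac{r!}{r_1!\cdots r_k!}\cdot\frac{(n-r)!}{(n_1-r_1)!\cdots(n_k-r_k)!}$ in the paper is precisely the numerator of your hypergeometric probability, so the two proofs coincide step by step.
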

\begin{proof} %Take a random partition of $[n]$ into parts of desired size. For any given set from ${[n]\choose r}$ the probability that it `respects' this partition is the same, and thus with probaThen, for any set that intersects the
	Let us consider all $\frac{n!}{n_1!\cdots n_k!}$ possible partitions of $[n]$ into $k$ disjoint subsets having the first desired property. It is easy to see that each $F \in \F$ contributes to the sizes of exactly
	\begin{equation*}
		\frac{r!}{r_1!\cdots r_k!} \cdot \frac{(n-r)!}{(n_1-r_1)!\cdots (n_k-r_k)!}
	\end{equation*}
	corresponding subfamilies. Now the pigeonhole principle finishes the proof.
\end{proof}

\subsection{Frankl--R\"odl product theorem} \label{sec2:FR}

In the present section we give a proof of \Cref{th:prod} from~\cite{FrRod} with minor exposition modifications, allowing the reader to compare our tree-like construction from \Cref{lem:star} with its precursor.

So, assume that both $\mathcal{N}_1\subset \R^{d_1}$ and $\mathcal{N}_2 \subset \R^{d_2}$ are super-Ramsey. Namely, that there exist constants $c_i, \varepsilon_i>0$ and sets $V_i(n) \subset \R^n$ such that $|V_i(n)| \le c_i^n$ and $|V_i(n)|/\alpha(V_i(n), \mathcal{N}_i) \ge (1+\varepsilon_i)^n$ for all $n \in \N$, $i=1,2$.

For large $n$, we consider a partition $n=n_1+n_2$, where $n_1\sim(1-\eta) n, n_2 \sim \eta n$, and the value of $\eta$ will be specified later. Put $V_1 = V_1(n_1)$, $V_2 = V_2(n_2)$, and $V = V_1\times V_2 \subset \R^n$. We will show that  the Cartesian product $\mathcal{N}_1\times\mathcal{N}_2$ is also super-Ramsey using this set. First, it is clear that $|V| \le c_1^{n_1}c_2^{n_2}$, i.e. that the size of $V$ is at most exponential with $n$. In what follows, we show that the size of any subset $W \subset V$ with no copies of $\mathcal{N}_1\times\mathcal{N}_2$ is exponentially smaller than $|V|$.

For all $\x \in V_1$, let $w_{\x}$ stand for the number of copies of $\mathcal{N}_2$ in the `layer' $W_{\x} \coloneqq W\cap \big(\{\x\}\times V_2\big)$. Similarly, for a copy $Y\subset V_2$ of $\mathcal{N}_2$, let $w_{Y}$ stand for the number of layers, indexed by $\x$, such that $\{\x\}\times Y \subset W_{\x}$. It is clear that $\sum_{\x} w_{\x} = \sum_{Y} w_{Y}$. To give an upper bound on $|W|$, we now estimate these sums.

On the one hand, each $w_{Y}$ does not exceed $\alpha(V_1, \mathcal{N}_1)$ since otherwise we will find a copy of $\mathcal{N}_1\times\mathcal{N}_2$ in $W$. Moreover, note that there are at most $|V_2|^m$ ways to choose $Y$, where $m=|\mathcal{N}_2|$. Hence,
\begin{equation} \label{eq:prod1}
	\sum_{Y \subset V_2} w_{Y} \le \alpha(V_1, \mathcal{N}_1) \cdot |V_2|^m \le  \frac{|V_1|\cdot|V_2|^m}{(1+\varepsilon_1)^{n_1}} \le |V|\cdot\frac{c_2^{(m-1)n_2}}{(1+\varepsilon_1)^{n_1}}.
\end{equation}

On the other hand, observe that each layer $W_{\x}$ contains at least $|W_{\x}| - \alpha(V_2, \mathcal{N}_2)$ copies of $\mathcal{N}_2$. To see this, while $|W_{\x}| > \alpha(V_2, \mathcal{N}_2)$, discard elements of the layer one by one, reducing the number of copies of $\mathcal{N}_2$ by at least $1$ in the remainder on each step\footnote{Here Frankl and R\"odl used a `weaker' bound $w_{\x} \ge \lfloor|W_{\x}|/\alpha(V_2, \mathcal{N}_2)\rfloor$, while one can  get a `stronger' estimate than ours by applying a  probabilistic argument similar to the one  used in the proof of the \textit{crossing lemma}, see~\cite{AlonSpencer}. However, these modifications would not change the resulting exponent asymptotically.}. Therefore,
\begin{equation} \label{eq:prod2}
	\sum_{\x \in V_1} w_{\x} \ge \sum_{\x \in V_1} |W_{\x}| - |V_1|\cdot\alpha(V_2, \mathcal{N}_2) \ge |W| - \frac{|V|}{(1+\varepsilon_2)^{n_2}}.
\end{equation}

Finally, by combining the inequalities \eqref{eq:prod1} and \eqref{eq:prod2}, we conclude that
\begin{equation} \label{eq:prod3}
	\frac{|W|}{|V|} \le \frac{1}{(1+\varepsilon_2)^{n_2}} + \frac{c_2^{(m-1)n_2}}{(1+\varepsilon_1)^{n_1}}.
\end{equation}
Now it remains only to note that the right-hand side decreases exponentially with $n$ if the value of $\eta$ is sufficiently small. More specifically, one can easily check that, by taking $\eta = \frac{\log(1+\varepsilon_1)}{\log\big((1+\varepsilon_1) (1+\varepsilon_2) c_2^{m-1}\big)}$, we get that $|W|\le |V|/\big(1+\varepsilon_2+o(1)\big)^{\eta n}$, which is asymptotically an optimal choice here. 

\section{Applications}\label{sec3}

\subsection{Euclidean Ramsey theory} \label{sec3:ERT}

For all $k \in \N$,  we call the subset\footnote{Note that here and in what follows we do not distinguish points and their position vectors.} $\{\boldsymbol{0},\e_1,\dots,\e_k\} \subset \R^k$, where $\e_i$ stands for the $i$'th standard basis vector in $\R^k$, a {\it $k$-semicross} and denote it by $\SC^k$. Given $V \subset \R^n$, consider a unit distance graph $G(V)=(V,E(V))$, where two points of $V$ are connected if and only if they are at unit distance apart. It is easy to see that any orthogonal star in the Cartesian power $V^k$ is isometric to $\SC^k$. Hence, \Cref{lem:star} implies the following upper bound\footnote{Observe that two independence numbers, $\alpha(G(V))$ and $\alpha(V)$ defined in the introduction, are identical.} on the maximum cardinality of a subset of $V^k \subset \R^{nk}$ with no isometric copies of $\SC^k$:
\begin{equation} \label{eq:alpha_SC}
	\alpha\big(V^k, \SC^k\big) \le k|V|^{k-1}\alpha(V).
\end{equation}

Now we apply the last inequality to the set $V(n) \subset \R^n$ from the statements of \Cref{th:rai} and combine the result with \eqref{eq:chi_alpha2}. This shows that
\begin{equation} \label{eq:chi_SK}
	\chi\big(\R^{kn}, \SC^k\big) \ge \frac{|V(n)|^k}{\alpha\big(V(n)^k, \SC^k\big)} \ge \frac{|V(n)|^k}{k|V(n)|^{k-1}\alpha(V(n))} = \frac{|V(n)|}{k\alpha(V(n))} \ge \frac{\big(\psi_2+o(1)\big)^n}{k}.
\end{equation}
If $k \in \N$ is fixed and $n \to \infty$, \eqref{eq:chi_SK} implies that
\begin{equation*} \label{eq:chi_SK2}
	\chi\big(\R^{n}, \SC^k\big) \ge \big(\psi_2^{1/k}+o(1)\big)^n.
\end{equation*}

Finally, a simple observation that $(k+1)$-semicross contains $k+1$ vertices of the regular $k$-dimensional simplex with side length of $\sqrt{2}$ completes the proof of \Cref{cor:simpl}.

\vspace{3mm}

Let us consider the following `asymmetric' generalization of the above argument. Given $k$ positive reals $\lambda_1,\dots,\lambda_k$, we call the subset $\{\boldsymbol{0},\lambda_1\e_1,\dots,\lambda_k\e_k\} \subset \R^k$ a {\it scaled $k$-semicross} and denote it by 	$\SC^k(\lambda_1,\dots,\lambda_k)$. One can easily check that any orthogonal star in the scaled Cartesian product $(\lambda_1V)\times\dots\times(\lambda_kV)$ is an isometric copy of $\SC^k(\lambda_1,\dots,\lambda_k)$. Thus, \Cref{lem:star} yields the following generalization of \eqref{eq:alpha_SC}:
\begin{equation*}
	\alpha\big((\lambda_1V)\times\dots\times(\lambda_kV), \SC^k(\lambda_1,\dots,\lambda_k)\big) \le k|V|^{k-1}\alpha(V).
\end{equation*}
As earlier, for fixed values of $k$, the last inequality implies that
\begin{equation*}
	\chi\big(\R^n, \SC^k(\lambda_1,\dots,\lambda_k)\big) \ge \big(\psi_2^{1/k}+o(1)\big)^n.
\end{equation*}

Observe that a right triangle with catheti lengths  $a$ and $b$ is isometric to the scaled $2$-semicross $\SC^2(a,b)$, while an acute triangle with side lengths of $a,b,$ and $c$ can be embedded into the scaled $3$-semicross
\begin{equation*}
\SC^3\big(\sqrt{(a^2+b^2-c^2)/2}, \sqrt{(b^2+c^2-a^2)/2}, \sqrt{(c^2+a^2-b^2)/2}\,\big).
\end{equation*}
This completes the proof of \Cref{cor:triangles}.

\vspace{2mm}

We remark that the argument from this section can be directly applied to the following problem considered by Naslund in~\cite{Naslund23}: provide a lower bound on the number of colors needed to color $\R^n$ such that no simplex with side lengths from a fixed $m$-element set is monochromatic. More specifically, one can use the values from the first column of the table at the end of~\cite[Section~1.2]{Naslund23} in the same way we use \Cref{th:rai} in this paper to numerically improve upon some of the values from the latter columns.

\subsection{Manhattan Ramsey theory} \label{sec3:MRT}

Though Frankl and Wilson~\cite{FranklWil1981} considered only Euclidean norm, their technique similarly works for all $\ell_p$-spaces. In particular, it can be argued that they proved a pair of points at unit distance apart to be a super-Ramsey configuration in case of Manhattan norm as well. The current quantitative record here is via the following analogue of \Cref{th:rai} (also by Raigorodskii~\cite{Rai4}).

\begin{Theorem} \label{th:rai2}
	There is a sequence of subsets $V(n) \subset \R_1^n$, $n \in \N$, such that
	$ \frac{|V(n)|}{\alpha(V(n))} \ge \big(\psi_1+o(1)\big)^n,$
	where $\psi_1 = \frac{1+\sqrt{3}}{2} = 1.366...$
\end{Theorem}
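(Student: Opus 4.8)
The plan is to mimic Raigorodskii's construction for the Euclidean case (\Cref{th:rai}), but with vectors adapted to the Manhattan norm so that the distance between two vectors becomes a function of the size of their common support (or, more precisely, of the coordinatewise overlap pattern). First I would fix a small parameter $x \in [0,1]$ and work over an alphabet that allows three types of coordinates — roughly, coordinates that are ``active positive'', ``active negative'', and ``zero'' — chosen so that the $\ell_1$-distance between two such vectors is constant whenever the number of active coordinates they share (and the number on which they disagree in sign) is constant. Concretely, one takes $V(n)$ to consist of all $\{0, \pm 1\}$-vectors in $\R^n$ with a prescribed number $\sim a n$ of nonzero entries, where $a$ is a second parameter to be optimized together with $x$; scaling this set by a suitable constant makes the forbidden unit distance correspond to a prescribed number of ``sign-flips plus support changes''.

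The key steps, in order, are: (1) write down the candidate family $V(n)$ as all vectors in $\{0,\pm1\}^n$ with exactly $\lceil an \rceil$ nonzero coordinates, so that $|V(n)| = \binom{n}{\lceil an\rceil} 2^{\lceil an\rceil} = \big(f(a) + o(1)\big)^n$ for an explicit entropy-type function $f$; (2) compute the $\ell_1$-distance between two members in terms of the number $t$ of coordinates where both are nonzero with the \emph{same} sign and the number $t'$ where both are nonzero with \emph{opposite} signs — the distance is an affine function of $2(\lceil an\rceil) - 2t$ essentially, so fixing the distance pins down $t + $ (something); (3) after rescaling, the unit-distance graph $G(V(n))$ becomes an intersection-type graph, and bounding $\alpha(V(n))$ reduces to a forbidden-intersection problem for which one invokes the Frankl--Wilson / Frankl--Rödl machinery (or the linear-algebra bound of Frankl--Wilson directly, since the relevant parameter $r - s$ can be arranged to be a prime power by adjusting $n$ along a subsequence, which suffices for the $o(1)$ statement); (4) this yields $\alpha(V(n)) \le \big(g(a,x)+o(1)\big)^n$, and optimizing the ratio $f(a)/g(a,x)$ over the admissible $(a,x)$ produces the value $\psi_1 = \frac{1+\sqrt 3}{2}$. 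Finally, combining with the pigeonhole bound \eqref{eq:chi_alpha2} gives the claimed $\chi(\R_1^n,\M) \ge (\psi_1+o(1))^n$ for a two-point $\M$, and $V(n)$ is visibly of at most exponential size, so the configuration is super-Ramsey.

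I expect the main obstacle to be step (2)–(3): correctly identifying, in the Manhattan norm, which combinatorial parameter of a pair of vectors governs their distance, and then matching that to a forbidden-intersection statement for which a clean exponential bound is already available. In the Euclidean setting the distance between two $\{0,\pm1\}$-vectors depends only on the Hamming-type overlap in a single clean way; in $\ell_1$ the ``same sign'' and ``opposite sign'' overlaps enter differently, so one has to choose the support size $a$ and the target distance so that a \emph{single} scalar constraint (amenable to Frankl--Wilson) captures the unit-distance condition, rather than a two-parameter constraint. A convenient workaround, which I would adopt if the three-value alphabet proves awkward, is to restrict to vectors with a \emph{fixed} split into positive and negative parts (say $\lceil an/2\rceil$ of each sign on a fixed-size support), so that only one overlap parameter remains free; this costs nothing in the exponent. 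The optimization in step (4) is then a routine (if slightly tedious) calculus exercise whose optimum one checks to be $\psi_1$, paralleling the computation behind \Cref{th:rai}; since this parallels Raigorodskii's already-published argument in~\cite{Rai4}, I would keep the exposition brief and refer to that paper for the detailed numerics.
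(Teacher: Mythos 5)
This theorem is not proved in the paper at all: it is Raigorodskii's result and the paper simply cites~\cite{Rai4} for it, exactly as it cites~\cite{Rai3} for \Cref{th:rai}. So there is no ``paper's own proof'' to compare against, and the cleanest thing you could have written is a one-line citation. What you have written instead is a partial reconstruction of \cite{Rai4}, and as a reconstruction it has a couple of real issues worth flagging.

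First, a small but telling confusion in your step (2): for $x,y\in\{0,\pm1\}^n$ with a common support size $r$, the $\ell_1$-distance is \emph{exactly} $\|x-y\|_1 = 2r - 2(s_{++}+s_{--})$, where $s_{++}+s_{--}$ is the number of coordinates on which $x$ and $y$ agree and are both nonzero. The opposite-sign overlap cancels out completely (each such coordinate contributes $2$ to the distance and also to $2\cdot(\text{support gap})$, netting to zero). So the $\ell_1$-case is governed by a \emph{single} overlap parameter, and is in fact cleaner than the $\ell_2$-case; your proposed ``workaround'' of freezing the $\pm$ split is unnecessary and, as stated, does not actually remove a degree of freedom (both $s_{++}$ and $s_{--}$ still vary independently).

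Second, and more seriously, step (3) is where the argument actually lives and you wave at it. The quantity $s_{++}+s_{--}$ is \emph{not} the scalar product $(x,y)$; it equals $\tfrac{1}{2}\big((x,y)+(|x|,|y|)\big)$, i.e.\ a \emph{quadratic} polynomial in $y$ for fixed $x$. The Frankl--Wilson linear-algebra argument therefore does not apply off-the-shelf: one either has to embed $\{0,\pm1\}^n\hookrightarrow\{0,1\}^{2n}$ (splitting $+1$- and $-1$-coordinates) and apply the set-theoretic Frankl--Wilson bound $\binom{2n}{p-1}$ there, or run the polynomial method directly over $\{0,\pm1\}^n$ with degree-$2$ building blocks and a careful dimension count. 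These give different exponents, and a back-of-the-envelope optimization of the first (coarser) route gives a base of roughly $1.350$, noticeably short of $\psi_1 = \tfrac{1+\sqrt3}{2}=1.366\ldots$. So ``invoke Frankl--Wilson, optimize, and the optimum comes out to $\psi_1$'' is not a safe claim: the exact constant depends on which version of the algebraic bound you use, and your sketch does not commit to one, let alone verify the optimization. If the intent is simply to justify the theorem statement for use downstream, the honest move is to cite \cite{Rai4} as the paper does; if the intent is to reprove it, the quadratic nature of the governing parameter and the precise dimension bound are the crux and cannot be deferred to ``routine calculus.''
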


Similarly, one can easily observe that the proof of \Cref{th:prod} presented in \Cref{sec2:FR} holds not only for Euclidean but also for Manhattan space and, more generally, for all $\ell_p$-spaces. Hence, when applied several times to a super-Ramsey two-point configuration, \Cref{th:prod} implies that the vertex set of any hyperrectangle, or a \textit{box}, is super-Ramsey.

For all positive reals $\lambda_1,\dots,\lambda_k$, we call the one-dimensional set $\{0, \lambda_1, \lambda_1+\lambda_2, \dots, \sum_{i=1}^k \lambda_i\}$ a {\it baton} and denote by $\B(\lambda_1,\dots,\lambda_k)$ for a shorthand. Observe that $\B(\lambda_1,\dots,\lambda_k)$ can be $\ell_1$-isometrically embedded into a $k$-dimensional box with side lengths of $\lambda_1,\dots,\lambda_k$. Therefore, all batons are also super-Ramsey. This statement is crucial for Manhattan Ramsey theory due to the following simple observation: each finite set $\M = \{\x^0,\ldots,\x^k\}\subset \R^d$ is a subset of the $d$-dimensional grid $\prod_{i=1}^d\{x^0_i,\ldots, x^k_i\}$, i.e. a Cartesian product of some batons. Now we apply \Cref{th:prod} again to this product and finish the proof of \Cref{th:fr_manh}.

\vspace{3mm}

As we discussed in~\Cref{sec1:ERT}, direct embedding of a simplex or a baton into a box leads to an exponentially small (in terms of the box dimension) value of the corresponding $\varepsilon$ from the statement of \Cref{th:fr_manh}, which is rather a pour bound. Using our tree-like structures instead, we provide the following strong quantitative improvements.

\begin{Theorem} \label{cor:manh_sympl}
	For all $k \in \N$ and all positive reals $\lambda_1,\dots,\lambda_k$, we have
	\begin{equation*}
		\chi\big(\R_1^n, \B(\lambda_1,\dots,\lambda_k)\big) \ge \big(\psi_1^{1/k}+o(1)\big)^n, \mbox{ and } \ \chi\big(\R_1^n, \triangle^k\big) \ge \big(\psi_1^{1/(k+1)}+o(1)\big)^n
	\end{equation*}
	as $n \to \infty$, where $\psi_1$ is from the statement of \Cref{th:rai2}. In particular, we have
	\begin{equation*}
		\chi(\R_1^n, \triangle) \ge \big(1.1095...+o(1)\big)^n, \mbox{ and } \ \chi(\R_1^n, \tetrahedron) \ge \big(1.0810...+o(1)\big)^n.
	\end{equation*}
	In fact, the penultimate bound holds not only for regular but also for all triangles.
\end{Theorem}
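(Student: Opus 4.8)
The plan is to mimic the Euclidean argument from \Cref{sec3:ERT} verbatim, substituting the $\ell_1$-construction of \Cref{th:rai2} for the Euclidean one of \Cref{th:rai} and substituting batons for semicrosses. First I would record the $\ell_1$-analogue of the key observation: in the Cartesian power $\R_1^{nk} = \big(\R_1^n\big)^k$, any orthogonal star based at edges of unit-distance graphs $G(V)$ (in the Manhattan norm) is an $\ell_1$-isometric copy of the baton $\B(1,\dots,1)$ — indeed, moving one coordinate block by a unit along an edge changes the $\ell_1$-distance to the center by exactly $1$, and the pairwise $\ell_1$-distances among the "leaves" are $2$, matching the baton on $k+1$ equally spaced points. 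More generally, scaling the $i$-th factor by $\lambda_i$ turns an orthogonal star into an $\ell_1$-isometric copy of $\B(\lambda_1,\dots,\lambda_k)$, after reindexing the points of the baton appropriately (the baton is a \emph{path} metric, and the orthogonal star in the scaled product realizes exactly these distances since the $\ell_1$-distance between $\U$-type tuples differing in blocks $i$ and $j$ is $\lambda_i+\lambda_j$).

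Next I would apply \Cref{lem:star} (the orthogonal-star case) to the graphs $G_i = G\big(\lambda_i V(n)\big)$ on the scaled vertex sets, where $V(n)\subset\R_1^n$ is the sequence from \Cref{th:rai2}. Since $\alpha\big(\lambda_i V(n)\big)/|\lambda_i V(n)| = \alpha(V(n))/|V(n)|$ and scaling does not change cardinalities, \Cref{lem:star} gives
\begin{equation*}
	\alpha\big((\lambda_1 V(n))\times\dots\times(\lambda_k V(n)),\, \B(\lambda_1,\dots,\lambda_k)\big) \le k\,|V(n)|^{k-1}\alpha(V(n)).
\end{equation*}
Combining with \eqref{eq:chi_alpha2} exactly as in \eqref{eq:chi_SK} yields $\chi\big(\R_1^{kn},\B(\lambda_1,\dots,\lambda_k)\big) \ge |V(n)|/\big(k\,\alpha(V(n))\big) \ge \big(\psi_1+o(1)\big)^n/k$, and fixing $k$ while letting $n\to\infty$ gives $\chi\big(\R_1^n,\B(\lambda_1,\dots,\lambda_k)\big) \ge \big(\psi_1^{1/k}+o(1)\big)^n$, which is the first claimed bound. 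For the simplex bound, I would observe that the regular $k$-dimensional simplex $\triangle^k$ with a suitable side length $\ell_1$-embeds into a $(k{+}1)$-semicross-type configuration, i.e. into an orthogonal star on $k+1$ scaled factors — concretely, the $k+1$ vertices $\boldsymbol 0, \e_1,\dots,\e_k$ in $\R_1^{k+1}$ realize a regular simplex in $\ell_1$, since all pairwise $\ell_1$-distances among $\e_1,\dots,\e_k$ equal $2$ and $\|\boldsymbol 0 - \e_i\|_1 = 1$... so one rescales to match; more carefully one should check which common side length is achievable and that it is a genuine $\ell_1$-isometric copy, then argue $\triangle^k$ embeds into $\SC^{k+1}$ in the $\ell_1$ sense. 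Running the same orthogonal-star bound with $k+1$ factors then gives $\chi\big(\R_1^n,\triangle^k\big)\ge\big(\psi_1^{1/(k+1)}+o(1)\big)^n$. Plugging $\psi_1 = (1+\sqrt3)/2$ and $k=2,3$ produces the numerical values $1.1095\ldots$ and $1.0810\ldots$ after a routine computation.

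For the final sentence — that the bound $1.1095\ldots$ holds for \emph{all} triangles, not just regular ones — I would use the asymmetric version: an arbitrary triangle with side lengths $a,b,c$ should $\ell_1$-embed into some scaled $2$-semicross $\SC^2(\mu_1,\mu_2)$ (equivalently a baton $\B(\mu_1,\mu_2)$ on three points $0,\mu_1,\mu_1+\mu_2$, whose three pairwise distances are $\mu_1,\mu_2,\mu_1+\mu_2$). The obstruction here is that a baton forces one distance to be the sum of the other two, so not every triangle embeds into a $2$-baton directly; instead I expect one needs the genuinely two-dimensional scaled semicross $\{\boldsymbol 0, \mu_1\e_1, \mu_2\e_2\}\subset\R_1^2$, whose pairwise $\ell_1$-distances are $\mu_1,\mu_2,\mu_1+\mu_2$ — same constraint. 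So the real claim must be that every triangle embeds into $\R_1^2$ (or a low-dimensional $\ell_1$-space) as an orthogonal-star-type configuration after an isometry, which uses the flexibility of the $\ell_1$ unit ball (a square): any triangle is $\ell_1$-isometric to one with two sides along the coordinate directions. \textbf{This embedding step is the main obstacle} and the only place where the "all triangles" refinement needs genuine care; once it is established, the chromatic-number lower bound follows from \Cref{lem:star} and \Cref{th:rai2} exactly as above, with the $k=2$ exponent $\psi_1^{1/3} = \big((1+\sqrt3)/2\big)^{1/3} = 1.1095\ldots$
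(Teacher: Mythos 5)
Your plan has the right skeleton (scaled Cartesian powers of the Frankl--Wilson/Raigorodskii sets from \Cref{th:rai2}, an independence-number bound from \Cref{lem:star}, then embeddings), but two steps are genuinely off.

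First, you conflate orthogonal \emph{stars} with batons. In the Cartesian product $(\lambda_1V)\times\dots\times(\lambda_kV)$, an orthogonal star $\W_0,\dots,\W_k$ has $\|\W_0-\W_i\|_1=\lambda_i$ and $\|\W_i-\W_j\|_1 = \lambda_i+\lambda_j$ for $i\neq j$; this is exactly the scaled semicross $\SC^k(\lambda_1,\dots,\lambda_k)$, \emph{not} the baton. The baton $\B(\lambda_1,\dots,\lambda_k)=\{0,\lambda_1,\lambda_1+\lambda_2,\dots\}$ has distances $\lambda_{i+1}+\dots+\lambda_j$, and these are realized precisely by the orthogonal \emph{path} $\U_0,\dots,\U_k$. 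For $k=2$ the two coincide, but already for $k=3$ the distance multisets $\{1,1,1,2,2,2\}$ (star/semicross) and $\{1,1,1,2,2,3\}$ (path/baton $\B(1,1,1)$) differ. Luckily \Cref{lem:star} covers both stars and paths, so the numerical bound survives once you use the path version for batons — but as written your isometry claim is false.

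Second, and more importantly, your handling of arbitrary triangles has a real gap and you flagged it yourself. You correctly observe that $\SC^2(\mu_1,\mu_2)$ (equivalently $\B(\mu_1,\mu_2)$) has the degenerate distance triple $\{\mu_1,\mu_2,\mu_1+\mu_2\}$, so non-degenerate triangles cannot embed there. But the fallback you propose — ``every triangle $\ell_1$-embeds into $\R_1^2$'' — does not plug into the framework at all: the chromatic bound here comes from \Cref{lem:star}, which needs the forbidden configuration to sit as an orthogonal star or path in a $k$-fold Cartesian product of scaled copies of $V(n)$; merely knowing a two-dimensional $\ell_1$-embedding gives no such product structure and no independence bound. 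The paper's move is different and cleaner: a triangle with side lengths $a,b,c$ is $\ell_1$-isometric to the three \emph{leaves} of the scaled $3$-semicross $\SC^3\bigl((a+b-c)/2,\,(b+c-a)/2,\,(c+a-b)/2\bigr)$, since leaf-to-leaf distances are $\mu_i+\mu_j$ and $(\mu_1+\mu_2,\mu_2+\mu_3,\mu_3+\mu_1)=(b,c,a)$. This uses $k=3$ factors, hence still yields the exponent $\psi_1^{1/3}=1.1095\ldots$, and the triangle inequality ensures all $\mu_i\ge 0$. (The same ``take the $k+1$ leaves of $\SC^{k+1}$'' viewpoint fixes the minor slip in your regular-simplex paragraph: the regular simplex is the leaf set, not $\{\boldsymbol 0,\e_1,\dots,\e_k\}$.)
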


The proof of this statement basically repeats the ideas from \Cref{sec3:ERT}, so we stress only the minor distinctions below. Given $V \subset \R^n$, consider a unit distance graph $G(V)=(V,E(V))$, where two points of $V$ are connected if and only if they are at unit \textit{Manhattan} distance apart. It is easy to see that any orthogonal star in the scaled Cartesian product $(\lambda_1V)\times\dots\times(\lambda_kV)$ is $\ell_1$-isometric to the scaled $k$-semicross $\SC^k(\lambda_1,\dots,\lambda_k)$, while any orthogonal path there is $\ell_1$-isometric to the baton $\B(\lambda_1,\dots,\lambda_k)$. Hence, as in the previous section, we can apply \Cref{lem:star} to the sets $V(n)$ from the statement of \Cref{th:rai2} to conclude that if $\M$ is either a baton $\B(\lambda_1,\dots,\lambda_k)$ or a scaled $k$-semicross $\SC^k(\lambda_1,\dots,\lambda_k)$, then
\begin{equation*}
	\chi(\R_1^n, \M) \ge \big(\psi_1^{1/k}+o(1)\big)^n
\end{equation*}
as $n \to \infty$, where $\psi_1$ is from the statement of \Cref{th:rai2}. To finish the proof of \Cref{cor:manh_sympl} it remains only to note that regular $k$-dimensional simplex can be $\ell_1$-isometrically embedded into a $(k+1)$-semicross, while any triangle with side lengths of $a,b,$ and $c$ can be $\ell_1$-isometrically embedded into the scaled $3$-semicross $\SC^3\big((a+b-c)/2, (b+c-a)/2, (c+a-b)/2\big)$.

\vspace{3mm}

At the end of this section, we mention that, for all $\ell_p$-spaces, one could  get similar lower bounds $\chi\big(\R_p^n, \triangle^k\big) \ge \big(\psi^{1/(k+1)}+o(1)\big)^n$ with $\psi=\frac{1+\sqrt{2}}{2} = 1.207...$  using the original result of Frankl and Wilson~\cite{FranklWil1981}. In case $p=\infty$ much better bounds are known, see~\cite{FKS}.

\subsection{Frankl--R\"odl from Frankl--Wilson} \label{sec3:FR}

We begin the proof of \Cref{cor:FR} with considering the case when $r-s$ is odd. Put $n_1 = n_2 = \lfloor n/3\rfloor$, $n_3=n-n_1-n_2$ and $r_1 = r_2 = \lfloor r/3\rfloor$, $r_3=r-r_1-r_2$. Let us consider a partition $[n]=N_1\cup N_2 \cup N_3$ of $[n]$ into $3$ disjoint parts of sizes $n_1, n_2,$ and $n_3$, respectively, guaranteed by \Cref{lem:part}, i.e. such that the
subfamily
\begin{equation*}
	\F' \coloneqq \big\{F \in \F: |F\cap N_i| = r_i \mbox{ for all } i = 1,2,3\big\}
\end{equation*}
satisfies
\begin{equation} \label{eq:FR0}
	|\F'|\cdot \binom{n}{r} \ge |\F|\cdot\binom{n_1}{r_1}\binom{n_2}{r_2}\binom{n_3}{r_3}.
\end{equation}

Several strengthenings of the ternary Goldbach problem are known, ensuring that every odd number can be expressed as the sum of three `almost equal' primes. One of the best bounds on the error term is due to Matom\"aki, Maynard, and Shao~\cite{MatMayShao2017}, who showed that every odd $x>5$ can be expressed as a sum of three primes, each having the form $x/3+o(x^{0.551})$. In particular, we can write $r-s$ as the sum of three primes $r-s=p_1+p_2+p_3$ with $p_i \sim (r-s)/3$ for all $i=1,2,3$. Put $s_i = r_i-p_i$. It is easy to see that $s_1+s_2+s_3=s$ and that $s_i\sim s/3$ asymptotically for all $i=1,2,3$.

Given $i \in \{1,2,3\}$, let us consider the following graph $G_i=(V_i,E_i)$. Its set of vertices $V_i = \binom{N_i}{r_i}$ consists of all $r_i$-element subsets of $N_i$. An edge connects two vertices whenever they share exactly $s_i$ common elements. It is clear that the independent sets are exactly the $s_i$-avoiding families by the definition. Recall that $|N_i|=n_i, r_i \sim \rho n_i, s_i \sim \sigma n_i$, and that $r_i-s_i=p_i$ is prime. Hence, we can apply \Cref{th:FWPR} to show that
\begin{equation} \label{eq:FR2}
	\alpha(G_i) \le \binom{n_i}{r_i}\big(\delta(\rho,\sigma)+o(1)\big)^{n_i}.
\end{equation}

Observe that for all $F \in \F'$, we have $F\cap N_i \in \binom{N_i}{r_i}$ by the definition. Therefore, we can think of $\F'$ as of a subset of the Cartesian product $V_1\times V_2 \times V_3$, so that $F \in \F'$ corresponds to the triple $(F\cap N_1, F\cap N_2, F\cap N_3)$. Moreover, observe that any two sets $F_0, F_3 \in \F'$, corresponding to the endpoints $\U_0 = (w_1,w_2,w_3)$ and $\U_3=(u_1,u_2,u_3)$ of an orthogonal path in $V_1\times V_2 \times V_3$, respectively, share exactly
\begin{equation*}
	|w_1\cap u_1|+|w_2\cap u_2|+|w_3\cap u_3|=s_1+s_2+s_3=s
\end{equation*}
common elements. Being a subfamily of $\F$, the family $\F'$ is also $s$-avoiding, and we conclude that there are no orthogonal paths in $\F'$. Thus, \Cref{lem:star} combined with~\eqref{eq:FR2} yields that
\begin{equation*}
	\frac{|\F'|}{\binom{n_1}{r_1}\binom{n_2}{r_2}\binom{n_3}{r_3}} \le \sum_{i=1}^{3} \big(\delta(\rho,\sigma)+o(1)\big)^{n_i} = \big(\delta(\rho,\sigma)+o(1)\big)^{n/3}.
\end{equation*}
Now we use~\eqref{eq:FR0} to get the desired upper bound on $|\F|$. This completes the proof of \Cref{cor:FR} in case when $r-s$ is odd.

We can argue similarly for even $r-s$ as well. Observe that if it were known that every sufficiently large even number can be expressed as the sum of two `almost equal' primes, then we could prove the statement of \Cref{cor:FR} with a better $c=2$ instead. But as long as this strengthening of the binary Goldbach problem remains only a conjecture, we can only argue as follows. Baker, Harman, and  Pintz~\cite{BakHarPin2001} proved that for all sufficiently large $x$, there is a prime number between $x$ and $x+x^{0.525}$. In particular, their result implies that there is a prime $p_1$ close to $(r-s)/4$. Then we can write an odd number $(r-s)-p_1$ as the sum of three almost equal primes $(r-s)-p_1=p_2+p_3+p_4$ as earlier. Now we split both $n$ and $r$ into $4$ almost equal summands, and the rest of our argument goes as before completing the proof of \Cref{cor:FR}.

\vspace{3mm}

Note that every odd number can be expressed not only as the sum of three `almost equal' primes, but also as the sum of three primes in any given asymptotic proportion, see~\cite{Sag2019_ThreePrimes} for the details. So, one can consider asymmetric variations of the present technique, hoping to improve the base of the exponent in  \Cref{cor:FR}. Our numerical experiments suggest that this improvement is possible if and only if $\sigma < \rho/2$. For instance, if $r\sim0.5n$, $s \sim 0.15n$, and $r-s$ is odd, then any $s$-avoiding family $\F \subset \binom{[n]}{r}$ satisfies $|\F| \le \big(1.970+o(1)\big)^n$ by \Cref{cor:FR}, while the asymmetric partitions $n \sim 0.774n+0.113n+0.113n$, $r\sim 0.384n+0.062n+0.052n$, $s\sim 0.084n+0.038n + 0.028n$ give a better bound $|\F| \le \big(1.964+o(1)\big)^n$. However, both of these bounds are still far from the tight upper bound $|\F| \le \big(1.911+o(1)\big)^n$ via \Cref{th:FWPR}, valid in the case when $r-s$ is prime or a prime power.

\vspace{3mm}

Now let us implement basically the same ideas to prove \Cref{cor:clique}. First, we apply \Cref{lem:part} to find a partition $[kn]=N_1\cup \dots \cup N_k$ of $[kn]$ into $k$ disjoint parts of sizes $n$ such that the
subfamily
\begin{equation*}
	\F' \coloneqq \big\{F \in \F: |F\cap N_i| = r \mbox{ for all } i = [k]\big\}
\end{equation*}
satisfies
\begin{equation} \label{eq:clique2}
	|\F'|\cdot \binom{kn}{kr} \ge |\F|\cdot\binom{n}{r}^k.
\end{equation}

Given $i \in [k]$, let us consider the following graph $G_i=(V_i,E_i)$. Its set of vertices $V_i = \binom{N_i}{r}$ consists of all $r$-element subsets of $N_i$. An edge connects two vertices whenever they share exactly $s$ common elements. As earlier, it is clear that the independent sets are exactly the $s$-avoiding families by the definition. Hence, \Cref{th:FWPR,cor:FR} imply that
\begin{equation} \label{eq:clique4}
	\alpha(G_i) \le \binom{n}{r}\big(\delta(\rho,\sigma)+o(1)\big)^{n/c}.
\end{equation}

Observe that for all $F \in \F'$, we have $F\cap N_i \in \binom{N_i}{r}$ by the definition. Therefore, we can consider $\F'$ as a subset of the Cartesian product $\VV$ by corresponding each $F \in \F'$ to the $k$-tuple $(F\cap N_1, \dots, F\cap N_k)$.

Let us consider a collection of sets $F_0, F_1, \dots, F_k$ that correspond to some orthogonal star
\begin{equation*}
	\W_0 = (w_1,\dots,w_k), \ \W_1 =(u_1,w_2,\dots,w_k), \ \dots \ , \  \W_k =(w_1,\dots,w_{k-1},u_k).
\end{equation*}
in $\VV$, respectively. Note that for all $1\le j_1 < j_2 \le k$, we have
\begin{equation*}
	|F_{j_1} \cap F_{j_2}| = |w_{j_1}\cap u_{j_1}|+|w_{j_2}\cap u_{j_2}|+\sum_{\substack{i=1 \\ i\neq j_1,j_2}}^{k}|w_i| =2s+(k-2)r.
\end{equation*}
Since $\F$ contains no such collection by the assumption, then so does $\F'$. Hence $\F'$ also contains no orthogonal stars, and \Cref{lem:star} combined with~\eqref{eq:clique4} yields that
\begin{equation*}
	\frac{|\F'|}{\binom{n}{r}^k} \le k \big(\delta(\rho,\sigma)+o(1)\big)^{n/c} = \big(\delta(\rho,\sigma)+o(1)\big)^{n/c}.
\end{equation*}
Now~\eqref{eq:clique2} completes the proof of \Cref{cor:clique}.

\subsection{Weak sunflowers} \label{sec3:sun}

In the present section, we prove \Cref{cor:snflwr}. Given $k \ge 3$, let $\F \subset 2^{[n]}$ be a family of subsets of $[n]$ that does not contain weak $k$-sunflowers.

First, observe that the subfamily $\{F \in \F: |F|=r\}$ has size at least $\frac{|\F|}{n+1}$ for some $r \le n$ by the pigeonhole principle. Since this polynomial factor can be easily hidden into the $o(1)$-term of any exponential upper bound of the form $\big(c+o(1)\big)^n$, we can assume without loss of generality that $\F \subset \binom{[n]}{r}$. We can also assume that $2k$ divides $n$ by taking the smallest $n'>n$ that is a multiple of $2k$ and considering $\F$ as a family of subsets of $[n']\supset[n]$.

It is cleat that for all $F \in \F$ there are exactly $\binom{n}{n/2}$ subsets $G \subset [n]$ such that the cardinality of the symmetric differences $F\triangle G$ equals $n/2$. Hence, the pigeonhole principle implies the for some $G \subset [n]$, the family
\begin{equation*} \label{eq:sun1}
	\F' \coloneqq \big\{F\triangle G : F \in \F\big\}\cap \binom{[n]}{n/2}
\end{equation*}
satisfies
\begin{equation} \label{eq:sun2}
	|\F'|\cdot 2^n \ge |\F|\cdot\binom{n}{n/2}.
\end{equation}

Observe that $\F'$ does not contain weak $k$-sunflowers. Indeed, assume the contrary, namely that for some $F_1,\dots,F_k \in \F$, all the pairwise intersections between $F_1\triangle G,\dots,F_k \triangle G \in \F'$ are of the same cardinality. Then it is not hard to see that
\begin{equation*}
	2|F_i\cap F_j|-2|(F_i\triangle G)\cap(F_j \triangle G)| = |F_i|+|F_j|-|F_i\triangle G|- |F_j\triangle G| = 2r-n
\end{equation*}
for all $i \neq j$. In particular, this implies that the cardinality of $|F_i\cap F_j|$ is independent of $i$ and $j$. Thus the collection $F_1,\dots,F_k \in \F$ is a weak $k$-sunflower, a contradiction.

Let $p$ be the largest prime such that $p<\frac{2-\sqrt{2}}{4}\frac{n}{k}$. Baker, Harman, and  Pintz~\cite{BakHarPin2001} proved that for all sufficiently large $x$, there is a prime number between $x$ and $x+x^{0.525}$. In particular, their result implies that $p\sim\frac{2-\sqrt{2}}{4}\frac{n}{k}$ asymptotically for large $n$. Put $s = \frac{n}{2k}-p \sim \frac{\sqrt{2}}{4}\frac{n}{k}$.

Since $\F'$ contains no weak $k$-sunflowers, it also does not contain a collection of $k$ sets with pairwise intersections of cardinality $\frac{n}{2}-2p = 2s+(k-2)\frac{n}{2k}$. Hence, we can apply \Cref{cor:clique} with $\frac{n}{k}$ playing the role of $n$, $\frac{n}{2k}$ playing the role of $r$, and with $c=1$ to show that
\begin{equation} \label{eq:sun3}
	|\F'| \le \binom{n}{n/2}\big(\psi+o(1)\big)^{-n/k},
\end{equation}
where
\begin{equation*}
	\psi = \delta(1/2, \sqrt{2}/4)^{-1} = \exp\big\{H\big((2-\sqrt{2})/2\big)-H\big((2-\sqrt{2})/4\big)\big\} = \frac{1+\sqrt{2}}{2}.
\end{equation*}
Now we combine~\eqref{eq:sun2} with~\eqref{eq:sun3} to complete the proof of \Cref{cor:snflwr}.

\vspace{3mm}

Observe that this argument does not fully use the fact that $\F$ contains no weak $k$-sunflowers, but rather relies only on the corollary that $\F$ contains no collection of $k$ sets with a prescribed cardinality of pairwise intersections. Though this cardinality is optimal within the technique, there may be a way to utilize other forbidden intersections to improve the result.

\section{Open problems} \label{sec:conc}

{\bf 1.} Suppose that the dimension of the simplex $k=k(n)$ depends on $n$. The pigeonhole principle immediately implies that $\chi\big(\R^n, \triangle^{k(n)}\big)> \frac{n}{k(n)} \to \infty$ if $k(n)=o(n)$ as $n \to \infty$, and it is not hard to deduce some stronger lower  bounds from~\eqref{eq:chi_SK}. %that $\chi\big(\R^n, \triangle^{k(n)}\big)$ tends to infinity with $n$ for  %there exists a positive $\varepsilon$ such that for all sufficiently large $n$, we have $k(n) \le (\ln\psi_2-\varepsilon+o(1))\frac{n}{\ln n}$, where $\varepsilon>0$.
What happens for the larger values of $k$? For instance, does the value $\chi(\R^{2n}, \triangle^n)$ tend  to infinity with $n$?

{\bf 2.} For a fixed $k \in \N$, the best upper bound on the chromatic number $\chi\big(\R^n, \triangle^k\big)$ for large $n$ is due to Prosanov~\cite{Pros2018_ExpRams}: $\chi\big(\R^n, \triangle^k\big) \le \big(1+\sqrt{2(k+1)/k} +o(1)\big)^n$. Observe that the base of this exponent does not tend to $1$ as $k$ grows, unlike the base of the exponent in the lower bound from \Cref{cor:simpl}. This raises the following question. For an arbitrary small positive $\varepsilon$, is there a sufficiently large $k=k(\varepsilon)$ such that $\chi\big(\R^n, \triangle^k\big) \le \big(1+\varepsilon +o(1)\big)^n$ as $n \to \infty$?

{\bf 3.} Note that both problems above can be asked for non-Euclidean $\ell_p$-spaces as well. Let us give another question of this sort.

If  $\lambda_1=\dots=\lambda_k=1$ then let us denote the baton $\B(\lambda_1,\dots,\lambda_k)$ by $\B_k$ for a shorthand. Recall that $\chi(\R_1^n,\B_k) \ge \big(\psi_1^{1/k}+o(1)\big)^n$ by \Cref{cor:manh_sympl}, and the base of this exponent $\psi_1^{1/k} = 1+ \frac{0.3119...}{k}+ O\big(\frac{1}{k^2}\big)$ tends to $1$ with roughly `linear' speed. At the same time, the best upper bound that we can get by combining the ideas from~\cite{Kup} and~\cite{Pros2018_ExpRams} is only $\chi(\R_1^n,\B_k) \le \big(2+\frac{2}{k}+o(1)\big)^n$. It is then natural to ask if, for any $\varepsilon>0$, there exists a sufficiently large $k=k(\varepsilon)$ such that $\chi\big(\R_1^n, \B_k\big) \le \big(1+\varepsilon +o(1)\big)^n$.

{\bf 4.} Recall that for all $k\ge 3$, the best known lower bound on $G_k(n)$ given by \eqref{eq:snflwr1} is subexponential with $n$. Erd\H{o}s and Szemer\'edi wrote in~\cite{ErdSzem1978} that `there is a good chance' that the correct growth rate of $G_k(n)$ is indeed subexponential. As of now, the following much more modest question is still open. Is it true that, % there exists a sufficiently small positive $\varepsilon$ such that, 
for any $k \ge 3$, a family $\F \subset 2^{[n]}$ of size at least $1.99^n$ contains a weak $k$-sunflower, provided $n=n(k)$ is large enough?

\vspace{2mm}

\noindent {\bf \large Acknowledgments}. We thank Eric Naslund for bringing the reference \cite{Naslund23} to our attention.

\vspace{2mm}

\noindent {\bf \large Funding}. The results of this work obtained in Section~2 were supported by the  \href{https://rscf.ru/project/24-71-10021/}{RSF grant No. 24-71-10021}.
The results of this work obtained in Section~3 were in part supported by the ERC Advanced Grant `GeoScape' No. 882971.
%The second author is also a winner of Young Russian Mathematics Contest and would like to thank its sponsors and jury.

{\small }

\end{document}